\let\origsection=\section \def\section{\@ifstar{\origsection*}{\mysection}} 
\def\mysection{\@startsection{section}{1}\z@{.7\linespacing\@plus\linespacing}{.5\linespacing}{\normalfont\scshape\centering\S}}
\def\rmlabel{\upshape({\itshape \roman*\,})}
\def\RMlabel{\upshape(\Roman*)}
\def\alabel{\upshape({\itshape \alph*\,})}
\let\polishlcross=\l
\def\l{\ifmmode\ell\else\polishlcross\fi}
\let\emptyset=\varnothing
\let\setminus=\smallsetminus
\theoremstyle{plain}
\newtheorem{theorem}             {Theorem}[section]
\newtheorem{lemma}     [theorem] {Lemma}           
\newtheorem{fact}[theorem] {Fact}   
\newtheorem{corollary}[theorem] {Corollary}
\newtheorem{definition}[theorem] {Definition}
\def\beq{\begin{equation}}\def\eeq{\end{equation}}
\def\beqn{\begin{eqnarray}}\def\eeqn{\end{eqnarray}}
\def\eps{\varepsilon}
\def\KE{\mathop{\text{\rm KE}}\nolimits}
\def\int{\mathop{\text{\rm int}}\nolimits}
\begin{document}

\title{Ramsey numbers for bipartite graphs with small bandwidth}

\author[G.~O.~Mota]{Guilherme O. Mota}
\address{Instituto de Matem\'atica e Estat\'{\i}stica, Universidade de S\~ao Paulo, S\~ao Paulo, Brazil}
\email{mota@ime.usp.br}

\author[G.~N.~S\'ark\"ozy]{G\'abor N. S\'ark\"ozy}
\address{Computer Science Department, Worcester Polytechnic Institute, Worcester, USA and
	Alfr\'ed R\'enyi Institute of Mathematics, Hungarian Academy of Sciences, Budapest, Hungary}
\email{gsarkozy@cs.wpi.edu}

\author[M.~Schacht]{Mathias Schacht}
\address{Fachbereich Mathematik, Universit\"at Hamburg, Hamburg, Germany}
\email{schacht@math.uni-hamburg.de}

\author[A.~Taraz]{Anusch Taraz}
\address{Institut f\"ur Mathematik, Technische Universit\"at Hamburg--Harburg, Hamburg, Germany}
\email{taraz@tuhh.de}

\thanks{G.~O.~Mota was supported by FAPESP (2009/06294-0 and 2012/00036-2) and he gratefully 
	acknowledges the support of NUMEC/USP, Project MaCLinC/USP. G.N.~S\'ark\"ozy was supported 
	in part by the NSF Grant DMS-0968699 and by OTKA Grant K104373. M.~Schacht was supported 
	by the Heisenberg-Programme of the DFG (grant SCHA 1263/4-1). 
	A.~Taraz was supported in part by DFG grant TA 309/2-2. The cooperation was supported 
	by a joint CAPES-DAAD project (415/ppp-probral/po/D08/11629, Proj.~no.~333/09).}

\keywords{Ramsey theory, bandwidth, regularity lemma}
\subjclass[2010]{05C55 (primary), 05C38 (secondary)}

\begin{abstract}
We estimate Ramsey numbers for bipartite graphs with small bandwidth and bounded 
maximum degree. In particular we determine asymptotically the two
and three color Ramsey numbers for grid graphs. More generally, we determine asymptotically the two color Ramsey number 
for bipartite graphs with small bandwidth and 
bounded maximum degree and the three color
Ramsey number for such graphs with the additional assumption that the bipartite graph is balanced.
\end{abstract}

\maketitle

\section{Introduction}
For graphs $G_1, \ldots , G_r$, the Ramsey number $R(G_1, \ldots , G_r)$ is 
the smallest integer~$n$ such that
if the edges of a complete graph $K_n$ are partitioned into $r$
disjoint color classes giving~$r$ graphs $H_1, \ldots , H_r$,
then at least one $H_i$ ($1\leq i \leq r$) contains a subgraph
isomorphic to $G_i$. The existence of such an integer follows from Ramsey's theorem. The number
$R(G_1, \ldots , G_r)$ is called the Ramsey number of the
graphs $G_1, \ldots , G_r$. 
Determining $R(G_1, G_2, \ldots , G_r)$ for general graphs appears to 
be a difficult problem (see, e.g.,~\cite{GrRoSp90} or~\cite{Ra94}). 
For $r=2$, a well-known theorem of
Gerencs\'er and Gy\'arf\'as~\cite{GeGy67} states that
$$R(P_n, P_n) = \left\lfloor \frac{3n-2}{2} \right\rfloor,$$
where $P_n$ denotes the path with $n\geq 2$ vertices.
In~\cite{HaLuTi02} more general trees were considered. For a tree $T$, we
write $t_1$ and $t_2$, with $t_2\geq t_1$, for the sizes of the vertex
classes of~$T$ as a bipartite graph. Note that $R(T,T)\geq 2t_1+t_2-1$, since the following edge-coloring of~$K_{2t_1+t_2-2}$ has no monochromatic 
copy of $T$. Partition the
vertices into two classes $V_1$ and~$V_2$ such that $|V_1|=t_1 - 1$ and $|V_2|=t_1+t_2-1$, then use color ``red'' for all edges inside the two classes 
and use color ``blue'' for all edges between the
classes. A similar edge-coloring of $K_{2t_2 -2}$ with two classes both of size $t_2-1$ shows that 
$R(T,T)\geq 2t_2-1$.
Thus
\begin{equation}\label{eq:ramseyTwoTree}
R(T,T)\geq \max \{ 2t_1+t_2, 2t_2 \} - 1.
\end{equation}
Haxell, {\L}uczak and Tingley provided in~\cite{HaLuTi02} an asymptotically matching upper bound for trees $T$ with $\Delta(T)=o(t_2)$.

We partially extend this to bipartite graphs with
small bandwidth and a more restrictive maximum degree
condition. A graph $H=(W,E_H)$ is said to have {\em bandwidth} at most $b$,
if there exists a labelling of the vertices by numbers $1, \ldots , n$ such that for every edge $ij\in E_H$ we have
$|i-j|\leq b$. We focus our attention on the following class of bipartite graphs.

\begin{definition}
A bipartite graph $H$ is a \emph{$(\beta,\Delta)$-graph} if its 
bandwidth is at most~$\beta |V(H)|$ and its maximum degree is at most
$\Delta$. We say that $H$ is a \emph{balanced
$(\beta,\Delta)$-graph} if it has a proper $2$-coloring $\chi\colon V(H)
\to [2]$ such that  ${\big||\chi^{-1}(1)|-|\chi^{-1}(2)|\big|
\leq \beta |\chi^{-1}(2)|}$.
\end{definition}

For example, it was shown in~\cite{BoPrTaWu10} that sufficiently large planar graphs with maximum degree at most 
$\Delta$ are $(\beta, \Delta)$-graphs 
for any fixed~$\beta>0$.  Our first theorem is
an analogue of the
result in~\cite{HaLuTi02} for $(\beta, \Delta)$-graphs.

\begin{theorem}
\label{thm:twocolor}
For every $\gamma>0$ and natural number $\Delta$, there exist a
constant $\beta>0$ and natural number $n_0$ such that for every
$(\beta,\Delta)$-graph $H$ on ${n\geq n_0}$ vertices with a proper
2-coloring $\chi: V(H) \to [2]$ where $t_1=|\chi^{-1}(1)|$ and
${t_2= |\chi^{-1}(2)|}$, with ${t_1\leq t_2}$, we have
$$
R(H,H) \leq (1+\gamma) \max \{2t_1+t_2, 2t_2 \}.
$$
\end{theorem}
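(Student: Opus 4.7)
The plan is to follow the regularity-embedding framework. Set $N = (1+\gamma)\max\{2t_1+t_2, 2t_2\}$, fix the constant hierarchy $\beta \ll 1/k_0 \ll \eps \ll d \ll \gamma, 1/\Delta$, and apply Szemer\'edi's regularity lemma with parameters $\eps$ and $d$ to the $2$-colored $K_N$. This produces a regular partition $V_0 \cup V_1 \cup \ldots \cup V_k$ and a reduced graph $R$ on $[k]$ whose edges are $2$-colored: an edge $ij$ receives color $c$ whenever the pair $(V_i, V_j)$ is $\eps$-regular with density at least $d$ in color $c$, and the few pairs that fail these conditions are discarded. The graph $R$ is then a $2$-edge-colored graph on $k$ vertices, complete apart from at most $\eps k^2$ missing edges, and it inherits the ``almost-extremal'' effective size $(1+\gamma - o(1))\max\{2t_1+t_2, 2t_2\}$ after scaling by $N/k$.

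The core of the argument is an approximate bipartite analogue of the Ramsey-type lemma of Haxell, {\L}uczak and Tingley for $R$. Concretely, I would show that some color class $R^{(c)}$ contains a connected subgraph $F$ with a bipartition $V(F) = A \cup B$ and a matching $M \subseteq F$ saturating $A \cup B$ such that
\[
\sum_{V_i \in A}|V_i| \geq (1+\gamma/3)\,t_1 \qand \sum_{V_j \in B}|V_j| \geq (1+\gamma/3)\,t_2.
\]
The heuristic is clear: if neither color contained such a bipartite-connected piece, then the color structure of $R$ would essentially mimic one of the two extremal configurations (one red clique of size $t_1-1$ and one of size $t_1+t_2-1$ with blue edges in between, or two red cliques of size $t_2-1$), contradicting the $(1+\gamma)$-factor in $N$. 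I would adapt the case analysis of Haxell--{\L}uczak--Tingley so as to control both parts of the bipartition rather than only the total size, which is what is needed to host both color classes of $H$.

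Once such $F$ and $M$ are in hand, a standard cleaning step turns each pair of $M$ into a super-regular pair at the cost of slightly enlarging $V_0$; the connectedness of $F$ is then used to link consecutive matching edges along a spanning closed walk of $F$ through short connecting pairs that carry only few vertices. The bandwidth hypothesis on $H$ is exploited as in the bandwidth theorem: the bandwidth labelling of $H$ is cut into consecutive slices $I_1, \ldots, I_{|M|}$ whose bipartition sizes match the cluster sizes of the corresponding matching edge, and only a $\beta n$-wide buffer between $I_j$ and $I_{j+1}$ needs to be routed through the connecting pair. A bandwidth-adapted application of the Blow-up Lemma on each super-regular pair then produces the desired monochromatic copy of $H$. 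The main obstacle I expect is the bipartite connected-matching lemma on $R$: this step forces the specific form $\max\{2t_1+t_2, 2t_2\}$ and requires a genuine extension of the tree-based Haxell--{\L}uczak--Tingley argument to track the two sides of the bipartition simultaneously, whereas the regularity reduction and the bandwidth-based embedding are technical but more routine variants of now-standard methods.
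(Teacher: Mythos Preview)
Your overall regularity-plus-embedding framework matches the paper's, but the structural lemma you propose for the reduced graph is not true in the unbalanced case, and this is exactly where the two-colour problem diverges from the balanced three-colour one.

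With equal-sized clusters coming out of the Regularity Lemma, a matching $M$ saturating $A\cup B$ forces $|A|=|B|$ and hence $\sum_{i\in A}|V_i|=\sum_{j\in B}|V_j|$. Your two size conditions therefore collapse to the single demand that some monochromatic connected matching cover at least $2(1+\gamma/3)\,t_2$ vertices. When $t_1$ is close to $t_2/2$ one has $N=(1+\gamma)(2t_1+t_2)\approx 2t_2$, and the $2$-colouring of $R$ that splits $[k]$ into parts of sizes $k/3$ and $2k/3$, colours all edges inside the parts red and all crossing edges blue, admits no monochromatic connected matching on more than $2k/3$ clusters, i.e.\ on more than $2N/3\approx (4/3)t_2<2t_2$ vertices. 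So neither colour provides the structure you ask for, even though $H$ does embed here (in blue, since the blue complete bipartite graph has sides exceeding $t_1$ and $t_2$ respectively). Your appeal to the two extremal configurations does not rule this colouring out, because it is far from both of them.

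The paper resolves this by abandoning equal-sized matching clusters. It quotes Theorem~3 of~\cite{HaLuTi02} as a black box (valid because $t_2/t_1\le 2$ in the case $2t_1\ge t_2$) to obtain, in some colour, one special cluster $W$ together with matching pairs $(X_i,Y_i)$ that are super-regular with the \emph{prescribed ratio} $|Y_i|/|X_i|=t_2/t_1$, each $X_i$ being regular to $W$. With this ratio built in, the matching only has to cover about $t_1+t_2=n$ vertices rather than $2t_2$, which is what $N=(1+\gamma)(2t_1+t_2)$ actually allows; the fan through $W$ then plays the role of your spanning walk for linking consecutive pairs. The interval-permutation and embedding steps you sketch afterwards go through essentially as in the proof of Theorem~\ref{thm:threecolor}, but they rely on having the correct $t_1{:}t_2$ ratio available inside each super-regular pair, which an equal-cluster saturating matching cannot deliver.
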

For more recent results on the Ramsey number of graphs 
of higher chromatic number and sublinear bandwidth, we refer the reader 
to the work of Allen, Brightwell and Skokan~\cite{AlBrSk13}.

For $r\geq 3$ colors less is known about Ramsey numbers. Let $T$ be a tree and consider $t_1$ and $t_2$, with $t_1\leq 
t_2$, the sizes of the
vertex classes of $T$ as a bipartite graph. For $r=3$ colors the following construction gives a lower bound for 
$R(T,T,T)$. Partition the vertices of $K_{t_1+3t_2-3}$ into 
four classes, one special class $V_0$ with
$|V_0|=t_1$ and three classes $V_1$, $V_2$ and $V_3$ of size $t_2-1$. The color for edges inside $V_0$ is arbitrary. Use 
color $i$ inside the classes 
$V_i$ and color $i$ between $V_i$ and $V_0$ for
$1\leq i\leq 3$. Finally, use color $k\in[3]\setminus\{i,j\}$ for edges between the classes $V_i$ and $V_j$ for $1\leq 
i<j\leq 3$. It is easy to check 
that this coloring yields no monochromatic
copy of $T$. Thus
\begin{equation}\label{eq:ramseyThreeTree}
R(T,T,T)\geq t_1 + 3t_2 - 2.
\end{equation}

Proving a conjecture of Faudree and Schelp~\cite{FaSc75},  it was shown in~\cite{GyRuSaSz07b} that this construction is 
optimal for large paths, i.e., 
for sufficiently large $n$ we have
$$R(P_n, P_n, P_n) = \left\{
\begin{array}{l}
2n - 1 \; \; \mbox{for odd} \; n,\\
2n - 2 \; \; \mbox{for even} \; n.
\end{array}\right.$$
Asymptotically this was also proved independently by Figaj and
{\L}uczak \cite{FiLu07}. Be\-ne\-vi\-des and Skokan~\cite{BeSk09} proved that $R(C_n,C_n,C_n)=2n$ for sufficiently large 
even $n$. Our second result 
extend the two above ones asymptotically to balanced $(\beta, \Delta)$-graphs.
\begin{theorem}\label{thm:threecolor}
For every $\gamma>0$ and every natural number $\Delta$, there exist a
constant $\beta>0$ and natural number $n_0$ such that for every
balanced $(\beta,\Delta)$-graph $H$ on $n\geq n_0$ vertices we
have
$$
R(H,H,H) \leq (2+\gamma) n.
$$
\end{theorem}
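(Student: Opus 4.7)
The plan is to follow the regularity-based strategy pioneered by Figaj and \L uczak~\cite{FiLu07} for $R(P_n,P_n,P_n)$, now adapted to balanced $(\beta,\Delta)$-graphs. Given $\gamma>0$ and $\Delta\in\mathbb{N}$, I would choose constants in the hierarchy $0<1/n_0\ll\varepsilon\ll d\ll\beta\ll\gamma, 1/\Delta$, set $N=\lceil(2+\gamma)n\rceil$, and consider an arbitrary 3-edge-coloring of $K_N$. The first step is to apply the multicolor Szemer\'edi regularity lemma to produce an equitable partition of $V(K_N)$ into clusters $V_1,\dots,V_k$ such that all but an $\varepsilon$-fraction of the pairs $(V_i,V_j)$ are $\varepsilon$-regular in each of the three colors simultaneously. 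One then forms the reduced graph $R$ on $[k]$ and colors each edge $ij$ of $R$ by the color of highest density in the corresponding pair, keeping only edges for which this density is at least $d$; the result is a 3-edge-colored (essentially) complete graph.

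Next I would invoke the Figaj--\L uczak theorem: any 3-edge-coloring of a large $K_k$ contains a monochromatic \emph{connected matching}, i.e.\ a matching $M$ all of whose edges lie in one connected component $C$ of a single color class, with $|V(M)|\geq(1/2-o(1))k$. Translating back to $K_N$, the clusters covered by $M$ host at least $(1/2-o(1))(2+\gamma)n\geq(1+\gamma/3)n$ original vertices, comfortably more than the $n$ vertices of $H$ to be embedded. A standard cleaning would then make each pair $(A_j,B_j)$ of $M$ super-regular with density at least $d/2$.

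For the embedding step, I would write $M=\{A_1B_1,\dots,A_sB_s\}$ and fix a bandwidth ordering $v_1,\dots,v_n$ of $H$. Splitting this ordering into consecutive intervals $I_1,\dots,I_s$ with $|I_j|$ approximately $|A_j|+|B_j|$, each interval has length much greater than $\beta n$, and since the proper $2$-coloring $\chi$ of $H$ is balanced, the restrictions $\chi|_{I_j}$ are themselves almost balanced. The two color classes of $I_j$ can therefore be assigned to $A_j$ and $B_j$ respectively. Between two consecutive intervals only $O(\beta n)$ vertices have edges crossing the cut; these ``bridge'' vertices would be embedded along short paths of clusters inside $C$ linking $A_jB_j$ to $A_{j+1}B_{j+1}$, using the connectivity of $C$. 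The embedding itself is then carried out by the Blow-up Lemma (or a bandwidth-adapted embedding lemma), exploiting super-regularity and the bound $\Delta(H)\leq\Delta$.

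The technical heart of the argument, and in my view the main obstacle, lies in this embedding step. One must simultaneously match cluster sizes with interval lengths, respect the balanced bipartition of $H$, and route the $O(\beta n)$ bridge vertices through the connecting paths in the monochromatic component $C$. The balanced hypothesis is indispensable here, as each matching edge $A_jB_j$ has $|A_j|=|B_j|$ and hence can only accommodate a balanced bipartite slice of $H$; the small bandwidth ensures that very few vertices ever need re-routing. Reconciling all these constraints and making the hierarchy of constants from the earlier steps sufficient for the blow-up embedding is where the technical work concentrates.
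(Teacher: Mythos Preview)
Your overall strategy matches the paper's: regularity on the 3-coloured $K_N$, a monochromatic connected matching in the reduced graph via Figaj--\L uczak, super-regular cleaning, and an embedding of $H$ along the matching edges with bridge vertices routed through the monochromatic component. However, there is a genuine gap in the embedding step.

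You assert that ``since the proper $2$-colouring $\chi$ of $H$ is balanced, the restrictions $\chi|_{I_j}$ are themselves almost balanced.'' This is false in general. Global balance of $\chi$ together with small bandwidth does \emph{not} force the colour classes to be balanced on consecutive intervals, no matter how long those intervals are relative to $\beta n$. For instance, take $H$ to be a disjoint union of stars $K_{1,3}$ laid out with bandwidth~$3$: in the first half of the ordering place the centres (colour~$1$) before their three leaves (colour~$2$), and in the second half reverse the roles. Then $\chi$ is globally balanced, but every interval contained in the first half has a $1{:}3$ colour ratio and every interval in the second half has a $3{:}1$ ratio. Since each matching pair $(A_j,B_j)$ in the host has $|A_j|=|B_j|$, such unbalanced slices simply do not fit, and your embedding breaks down.

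The paper confronts exactly this issue in Section~2.3. Its remedy is to chop $H$ into many more intervals than there are matching edges (roughly $\hat\ell\gg\ell$ of them) and then to prove a combinatorial lemma (Lemma~2.9) producing a \emph{permutation} $\sigma$ of these small intervals so that every block of $\hat\ell/\ell$ consecutive intervals in the $\sigma$-order is colour-balanced up to a $\xi$-error. The permuted blocks are then assigned to the matching edges, and the routing through the tree now has to handle not only the transitions between consecutive blocks but also between consecutive intervals inside a block (which, after permutation, are no longer adjacent in the bandwidth ordering). This rearrangement is the step you are missing. A secondary point: your constant hierarchy $\varepsilon\ll d\ll\beta$ is inverted; in the paper $\beta$ is chosen last and is of order $\varepsilon/K_0^2$, since the bandwidth must be tiny compared to the cluster sizes and the number of links to be routed.
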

In particular, Theorems \ref{thm:twocolor} and \ref{thm:threecolor} give the asymptotics for two and three color Ramsey 
numbers of grid graphs. The 
$2$-dimensional grid graph $G_{a,b}$ is
the graph with vertex set ${V=[a]\times[b]}$ and there is an edge between two vertices if they are equal in one 
coordinate
and consecutive in the other. Note that any grid graph $G_{a,b}$ on $ab$ vertices has bandwidth at 
most $\min\{a,b\}$ and satisfies $\Delta(G)\leq 4$. Moreover, $G_{a,b}$ is a balanced
$(\beta,4)$-graph for any fixed $\beta>0$ and sufficiently large $ab$. Consequently, Theorems \ref{thm:twocolor} and 
\ref{thm:threecolor} combined 
with \eqref{eq:ramseyTwoTree} and
\eqref{eq:ramseyThreeTree} give the following corollary. 

\begin{corollary}\label{cor:grid}
For grid graphs $G_{a,b}$ we have
\begin{align*}
R(G_{a,b},G_{a,b}) &= \big(3/2 + o(1)\big)ab\\
\intertext{and}
R(G_{a,b},G_{a,b},G_{a,b}) &= \big(2+o(1)\big)ab,
\end{align*}
where $o(1)$ tends to 0 as $ab\to\infty$.
\end{corollary}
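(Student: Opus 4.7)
The plan is to derive Corollary~\ref{cor:grid} by pairing Theorems~\ref{thm:twocolor} and~\ref{thm:threecolor} (upper bounds) with minor extensions of the constructions already sketched in the introduction (lower bounds), so essentially no new combinatorics is required — only a verification that the grid~$G_{a,b}$ fits the hypotheses.

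First I would fix $\gamma>0$, let $\beta=\beta(\gamma,4)>0$ be the smaller of the two constants produced by Theorems~\ref{thm:twocolor} and~\ref{thm:threecolor} (with $\Delta=4$), and check the three structural properties of $G_{a,b}$ that the hypotheses demand. The bandwidth bound $\min\{a,b\}$ is obtained by the ``row by row'' labelling: order the vertices so that $(i,j)$ precedes $(i',j')$ iff $j<j'$ or ($j=j'$ and $i<i'$); under this ordering neighbours differ in their label by at most $\min\{a,b\}$. Since $n=ab\to\infty$ forces $\max\{a,b\}\to\infty$, we eventually have $\min\{a,b\}\le \beta\, ab$, so $G_{a,b}$ is a $(\beta,4)$-graph. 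The natural proper 2-coloring by parity of $i+j$ produces classes of sizes $\lceil ab/2\rceil$ and $\lfloor ab/2\rfloor$, so their difference is at most $1\le\beta\lceil ab/2\rceil$ once $ab$ is large, and hence $G_{a,b}$ is additionally a balanced $(\beta,4)$-graph. With $t_1,t_2=(1/2+o(1))ab$, Theorem~\ref{thm:twocolor} then yields
\[
R(G_{a,b},G_{a,b})\le (1+\gamma)\max\{2t_1+t_2,2t_2\}= (3/2+\gamma+o(1))\,ab,
\]
and Theorem~\ref{thm:threecolor} yields $R(G_{a,b},G_{a,b},G_{a,b})\le (2+\gamma)ab$.

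For the matching lower bounds I would observe that the two edge-colorings described just before \eqref{eq:ramseyTwoTree} and \eqref{eq:ramseyThreeTree} use only the fact that the forbidden graph is bipartite with parts of sizes $t_1\le t_2$; nowhere is it used that the graph is a tree. Concretely, in the 2-color construction on $K_{2t_1+t_2-2}$ with $|V_1|=t_1-1$, $|V_2|=t_1+t_2-1$, the red graph is a disjoint union of two cliques, each on fewer than $t_1+t_2$ vertices, and the blue graph is $K_{t_1-1,t_1+t_2-1}$, which cannot contain a bipartite graph with parts $t_1$ and $t_2$ since neither $t_1$ nor $t_2$ fits into the part of size $t_1-1$. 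The analogous check works for the 3-color construction on $K_{t_1+3t_2-3}$. Applied to $H=G_{a,b}$ with $t_1,t_2=(1/2+o(1))ab$ this gives
\[
R(G_{a,b},G_{a,b})\ge (3/2-o(1))\,ab\qqand R(G_{a,b},G_{a,b},G_{a,b})\ge (2-o(1))\,ab.
\]

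Combining the two directions and letting $\gamma\to 0$ (which is legitimate since $\gamma$ was arbitrary and the lower bound is independent of $\gamma$) yields the stated asymptotics. There is really no ``hard step'' here; the only non-cosmetic point that could trip one up is noticing that the lower-bound constructions from the introduction, though phrased for trees, are purely bipartite-size obstructions and therefore transfer verbatim to the grid.
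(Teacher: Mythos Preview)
Your proposal is correct and matches the paper's own argument: the corollary is stated immediately after the observation that grids have bandwidth at most $\min\{a,b\}$, maximum degree~$4$, and are balanced $(\beta,4)$-graphs for any fixed $\beta>0$ once $ab$ is large, so that Theorems~\ref{thm:twocolor} and~\ref{thm:threecolor} combine with the lower-bound constructions~\eqref{eq:ramseyTwoTree} and~\eqref{eq:ramseyThreeTree}. Your only addition is spelling out explicitly that those constructions use nothing about trees beyond bipartiteness with given part sizes, which the paper leaves tacit.
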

We remark that similar bounds follow for bipartite planar graphs with bounded degree and grids of higher dimension.

This paper is organized as follows. We first give the necessary tools in Section~\ref{sec:aux} and then present a 
detailed proof of 
Theorem~\ref{thm:threecolor} in Section~\ref{sec:main}. The proof of Theorem~\ref{thm:twocolor} is very similar and here 
we only present an outline, in 
Section~\ref{sec:two}.

\section{Auxiliary results}\label{sec:aux}

The main purpose of this section is to present the tools for the proof of Theorem~\ref{thm:threecolor}. A main tool in 
the proof is Szemer\'edi's 
Regularity Lemma~\cite{Sz75}. We discuss the
Regularity Method in Section~\ref{subsec:reg}. In Sections~\ref{subsec:blowtree} and~\ref{subsec:balBlocks} we give some 
results that allow us to make use of the regularity method.

\subsection{The Regularity Method}\label{subsec:reg}

Given an graph $G$ on $n$ vertices, the \emph{density} of $G$ is given by $d_G=e(G)/{n\choose 2}$. Furthermore, if $A$, 
$B\subset V(G)$ are non-empty 
and disjoint, we denote by
$e_G(A,B)$ the number of edges of $G$ with one endpoint in $A$ and
the other in $B$ and
\begin{equation*}
d_G(A,B)=\frac{e_G(A,B)}{|A||B|}
\end{equation*}
is the \emph{density} of $G$ between $A$ and $B$. 

The bipartite graph $G=(A,B;E)$ is called $\eps$\emph{-regular} if
for all $X\subset A$, $Y\subset B$ with $|X|>\eps|A|$ and $|Y|>\eps|B|$ we have
\begin{equation*}
|d_G(X,Y)-d_G(A,B)|<\eps. 
\end{equation*}
We say that $G$ is $(\eps,d)$\emph{-regular} if it is
$\eps$-regular and $d_G(A,B)\geq d$. An $\eps$-regular bipartite graph $(A,B;E)$ is called 
$(\eps,d)$\emph{-super-regular} if we have ${\deg_G(a)>d|B|}$ for 
all $a\in A$ and also ${\deg_G(b)>d|A|}$ for all $b\in B$.

For a graph $G=(V,E)$, a partition $(V_i)_{i\in [s]}$ of $V$ is said to be $(\eps,d)$\emph{-regular} (resp.\ 
\emph{super-regular}) \emph{on a graph} 
$R$ with vertex set contained in $[s]$ if the
bipartite subgraph of $G$ induced by the pair $\{V_i,V_j\}$ is $(\eps,d)$-regular (resp.\ super-regular) whenever $ij\in 
E(R)$.
We say that a graph $R$ on vertex set $[s]$ is the $(\varepsilon,d)$-\emph{reduced graph} of $(V_i)_{i\in [s]}$ (or of 
$G$) if $ij$ is an edge of $R$ 
if and only if 
the bipartite graph defined by the pair $\{V_i,V_j\}$ is $(\varepsilon,d)$-regular in $G$. 
The proof of Theorem
\ref{thm:threecolor} is based on the following three color version
of the Regularity Lemma.
\begin{lemma}[Regularity Lemma]\label{lemma:Regularity}
For every $\eps>0$ and every integer $k_0>0$ there is a
positive integer $K_0(\eps, k_0)$ such that for $n\geq K_0$ the
following holds. For all graphs $G_1$, $G_2$ and $G_3$ with
$V(G_1)=V(G_2)=V(G_3)=V$, $|V|=n$, there is a partition of $V$
into $k+1$ classes $V=V_0,V_1,V_2,\dots,V_k$ 
such that
\begin{enumerate}[label=\rmlabel]
\item $k_0\leq k\leq K_0$,
\item $|V_1|=|V_2|=\dots=|V_k|$,
\item $|V_0|<\eps n$,
\item apart from up to at most $\eps {k \choose 2}$ exceptional pairs, the pairs $\{V_i,V_j\}$ are $\eps$-regular in~$G_1$,~$G_2$ and~$G_3$.
\end{enumerate}
\end{lemma}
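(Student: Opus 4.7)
The plan is to follow the standard proof of Szemer\'edi's Regularity Lemma while handling all three graphs $G_1, G_2, G_3$ simultaneously via a single potential function. Given a partition $\mathcal{P}=\{V_1,\dots,V_k\}$ of $V$ and a graph $G$ on $V$, recall the index
\[
\mathrm{ind}_G(\mathcal{P}) \;=\; \sum_{1 \le i < j \le k} \frac{|V_i||V_j|}{n^2}\, d_G(V_i,V_j)^2 \;\in\; [0,\tfrac{1}{2}],
\]
and set
\[
\mathrm{Ind}(\mathcal{P}) \;:=\; \mathrm{ind}_{G_1}(\mathcal{P})+\mathrm{ind}_{G_2}(\mathcal{P})+\mathrm{ind}_{G_3}(\mathcal{P}) \;\in\; [0,\tfrac{3}{2}].
\]
Two standard facts from Szemer\'edi's original proof then drive the argument. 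By the convexity of $x\mapsto x^2$, the function $\mathrm{ind}_G$ is non-decreasing under refinement of $\mathcal{P}$, and hence so is $\mathrm{Ind}$; and if the pair $(V_i,V_j)$ is $\eps$-irregular in some $G_\ell$, witnessed by subsets $X\subset V_i$ and $Y\subset V_j$, then the refinement that splits $V_i$ by $\{X,V_i\setminus X\}$ and $V_j$ by $\{Y,V_j\setminus Y\}$ strictly increases the contribution of this pair to $\mathrm{ind}_{G_\ell}$ by at least $\eps^4\cdot|V_i||V_j|/n^2$ (defect Cauchy--Schwarz).

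Starting from an arbitrary equipartition into $k\ge k_0$ parts plus an empty exceptional class $V_0$, I would iterate the following step. If the current equipartition has at most $\eps\binom{k}{2}$ pairs that are $\eps$-irregular in \emph{some} $G_\ell$, halt; conditions~(i)--(iv) then hold. Otherwise, by pigeonhole there is a color $\ell\in\{1,2,3\}$ for which at least $\frac{\eps}{3}\binom{k}{2}$ pairs are $\eps$-irregular in $G_\ell$. Refining each $V_i$ simultaneously along the Boolean algebra generated by all its $G_\ell$-irregularity witnesses (so that each $V_i$ is cut into at most $2^{k-1}$ atoms) then increases $\mathrm{ind}_{G_\ell}$ by at least $\eps^5/6$, while leaving $\mathrm{ind}_{G_1},\mathrm{ind}_{G_2},\mathrm{ind}_{G_3}$ all non-decreasing by monotonicity. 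Hence $\mathrm{Ind}$ grows by at least $\eps^5/6$ per iteration; since $\mathrm{Ind}\le 3/2$, the procedure halts after at most $9/\eps^5$ rounds.

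After each refinement the partition is no longer equitable, so one reequalizes by subdividing each new atom into blocks of a common target size and moving any leftover vertices into the exceptional class $V_0$. A routine calculation shows that $|V_0|$ grows by only a negligible amount per round, so a judicious initial choice of the common class size ensures that $|V_0|<\eps n$ at termination. Since each iteration multiplies the number of parts by a factor depending on the current $k$, the final bound on $k$ is a tower function $K_0(\eps,k_0)$ of height $O(\eps^{-5})$. The substantive addition over the one-graph lemma is only the pigeonhole selection of the ``worst'' color, which is essentially free; the step likely to require the most careful bookkeeping is the reequalization that maintains condition~(ii) across all iterations while keeping $V_0$ small.
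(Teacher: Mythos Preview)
Your proof outline is essentially the standard energy-increment argument for the multicolor Regularity Lemma and is correct in spirit. However, the paper does not actually prove this lemma: it is stated as a known tool with a citation to Szemer\'edi's original paper~\cite{Sz75}, and no proof is given. So there is nothing to compare against; you have simply supplied a proof where the paper relies on the literature.
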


For extensive surveys on the Regularity Lemma and its
applications see \cites{KoSi96,KoShSiSz02}.
A key component of the regularity method is the Blow-up
Lemma \cite{KoSaSz97} (see also \cites{KoSaSz98, RoRu99,RoRuTa99} for alternative proofs). This lemma guarantees that
bipartite spanning subgraphs of bounded degree can be embedded
into super-regular pairs. In fact, the statement is more general
and allows the embedding of $r$-chromatic graphs into the union of
$r$ vertex classes that form ${r \choose 2}$ super-regular pairs.

Here we will use a version of the Blow-up lemma that allows us to embed graphs $H$ of bounded-degree in a graph $G$ when 
$G$ and $H$ have 
``compatible'' partitions, in the sense explained in the
definition below. In our proof we will embed $H$ in parts, considering a partition of a monochromatic subgraph $G$ of 
$K_N$ with corresponding 
reduced graph containing a tree~$T$
that contains a ``large'' matching $M$, where the bipartite subgraphs of~$G$ corresponding to the matching edges are 
super-regular pairs.

\begin{definition}\label{def:compatiblePartitions}
Suppose $H\!=\!(W, E_H)$ is a graph, $T\!=\!([s], E_T)$ is a tree, and $M\!=\!([s], E_{M})$ is a subgraph of $T$ where $E_M$ is 
a matching. Given a 
partition $(W_i)_{i\in[s]}$ of $W$, let $U_i$, for $i\in[s]$, be the set of
vertices in $W_i$, with neighbors in some $W_j$ with $ij \in E_{T}\setminus E_{M}$ and set~$U= \bigcup U_i$ and $U'_i= 
N_H(U)\cap(W_i\setminus U)$.

We say that $(W_i)_{i\in[s]}$ is \emph{$(\varepsilon,T,M)$-compatible} with a vertex partition $(V_i)_{i\in[s]}$ of
some graph $G=(V,E)$ if the
following holds:
\begin{enumerate}[label=\RMlabel]
 \item\label{d:cPi} $xy\in E_H$ for $x\in W_i$ and $y\in W_j$ implies $ij\in E_T$ for all $i, j \in [s]$,
 \item\label{d:cPii} $|W_i|\leq |V_i|$ for all $i\in [s]$,
 \item\label{d:cPiii} $|U_i|\leq \varepsilon |V_i|$ for all $i\in [s]$,
 \item\label{d:cPiv} $|U'_i|,|U'_j|\leq \eps \min\{|V_i|,|V_j|\}$ for all $ij\in E_M$.
\end{enumerate}
\end{definition}

We remark that for connected graphs $H$ and for every vertex $i$ of $T$ which is not covered by $M$ we have $U_i=W_i$ 
and $U'_i=\emptyset$.

The following corollary of the Blow-up Lemma (see \cite{Bo09}) asserts that in the setup of Definition 
\ref{def:compatiblePartitions} graphs $H$ of 
bounded degree can be embedded into $G$,
if $G$ admits a partition being sufficiently regular on $T$ and super-regular on $M$.

\begin{lemma}[Embedding Lemma \cites{Bo09,BoHeTa10}]\label{lemma:GeneralEmbedding}
For all $d, \Delta > 0 $ there is a 
${\varepsilon = \varepsilon(d, \Delta) > 0}$ such that the following holds. 

Let $G = (V, E)$ be an $N$-vertex
graph that has a partition $(V_i)_{i\in[s]}$ of $V$ with $(\varepsilon, d)$-reduced graph $T$ on~$[s]$ which is
$(\varepsilon, d)$-super-regular on a graph $M \subset T$. Further, let $H = (W, E_H)$ be an $n$-vertex graph with 
maximum degree
$\Delta(H) \leq \Delta$ and $n\leq N$ that has a vertex partition $(W_i)_{i\in[s]}$ of $W$ which is 
$(\varepsilon,T,M)$-compatible with
$(V_i)_{i\in[s]}$. Then $H\subset G$.
\end{lemma}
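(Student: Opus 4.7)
The plan is to reduce Lemma~\ref{lemma:GeneralEmbedding} to the standard Blow-up Lemma with image restrictions by peeling off the set $U=\bigcup_i U_i$ in a preliminary greedy phase. I would choose $\eps=\eps(d,\Delta)$ small enough that first, a greedy embedding of $U$ using only $\eps$-regularity of the non-matching $T$-pairs is feasible, and second, the super-regular matching pairs remain regular enough for the Blow-up Lemma after the loss caused by the first step. Since $M$ is a matching, the second stage then decomposes into $|E_M|$ independent subproblems, one per edge of $M$.

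\emph{Stage 1 (greedy embedding of $U$).} I would process the vertices of $U$ one by one, say in breadth-first order of $H[U]$. Each $u\in U_i$ is embedded into $V_i$ avoiding previously used images. For every not-yet-embedded $v\in W_j$ that already has $t$ embedded neighbors, the standard regularity/counting argument maintains a candidate set $C(v)\subset V_j$ of size at least $((d-\eps)^{t}-t\eps)|V_j|$. Condition~\ref{d:cPiii} ensures $|U_i|\leq\eps|V_i|$, and together with $\Delta(H)\leq\Delta$ this makes the greedy choice always feasible; at the end of Stage~1 every $v\in U'_i$ (still to be embedded) has $|C(v)|\geq \mu|V_i|$ for some $\mu=\mu(d,\Delta)>0$.

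\emph{Stage 2 (Blow-up Lemma on matching pairs).} For each $ij\in E_M$, let $V_i',V_j'$ denote the unused parts of $V_i,V_j$; the remaining vertices of $H$ to embed into this pair are $(W_i\setminus U_i)\cup(W_j\setminus U_j)$. First, $(V_i',V_j')$ is still $(2\eps,d/2)$-super-regular, since only an $\eps$-fraction was removed from each side and super-regularity is stable under such small perturbations. Second, the vertices carrying image restrictions are precisely those of $U'_i\cup U'_j$, whose number is bounded by $\eps\min\{|V_i|,|V_j|\}$ via condition~\ref{d:cPiv}, and whose target sets have linear size by Stage~1. Thus the hypotheses of the Blow-up Lemma with image restrictions are met, and the embedding can be completed on each matching pair independently, with the choices on different pairs being automatically compatible because vertices in different pairs are non-adjacent in $H$ (the only edges crossing a matching pair $\{i,j\}$ go to vertices already placed in Stage~1).

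The main obstacle is the usual constant-chasing: $\eps$ must be small enough that $(d-\eps)^{\Delta}$ remains a valid density parameter throughout all intermediate steps, that the image-restricted fraction $\eps$ lies below the Blow-up Lemma's tolerance, and that super-regularity survives into Stage~2 with parameters still in its admissible range. Once the hierarchy of constants is ordered correctly ($d,\Delta\to$ Blow-up Lemma constants $\to\mu\to\eps$), the argument reduces to a clean two-stage embedding that directly exploits the matching structure of $M$.
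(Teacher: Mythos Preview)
The paper does not prove Lemma~\ref{lemma:GeneralEmbedding}; it is quoted as a corollary of the Blow-up Lemma and attributed to \cite{Bo09} and \cite{BoHeTa10}, so there is no in-paper proof to compare against. Your two-stage scheme---greedy embedding of $U$ using regularity along the non-matching edges of $T$, followed by an independent application of the Blow-up Lemma with image restrictions on each edge of $M$---is exactly the standard argument behind this type of statement and is essentially what the cited references do. The sketch is correct: condition~\ref{d:cPiii} guarantees that Stage~1 uses up only an $\eps$-fraction of each cluster (so super-regularity survives), and condition~\ref{d:cPiv} guarantees that the number of image-restricted vertices per matching pair is below the Blow-up Lemma's tolerance; the matching structure of $M$ is precisely what makes the Stage~2 subproblems independent, as you note.

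One small point worth tightening if you write this out in full: in Stage~1 you must also maintain candidate sets for the not-yet-embedded vertices of $U$ itself (not only for $U'$), since a vertex $u\in U_i$ may have several already-embedded $U$-neighbours; the usual Key-Lemma bookkeeping handles this, but the BFS order on $H[U]$ alone does not guarantee at most one embedded neighbour, because $H[U]$ need not be a tree. This is a routine refinement and does not affect the overall strategy.
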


We close this section with two simple facts. They follow easily from the definitions of regular and super-regular pairs.

\begin{fact}\label{fact:Slicing}
Let $B=(V_1,V_2;E)$ be an $\varepsilon$-regular bipartite graph, let $\alpha>\varepsilon$, and
let $V_1'\subset V_1$ and ${V_2'\subset V_2}$ be subsets with 
$|V_1'|\geq \alpha |V_1|$ and 
$|V_2'|\geq \alpha |V_2|$. 
Then for $\varepsilon'=\max\{\varepsilon/\alpha,2\varepsilon\}$
the graph ${B'=\big(V_1',V_2';E_B(V_1',V_2')\big)}$ is 
$\varepsilon'$-regular with
${|d_B(V_1,V_2)-d_{B'}(V_1',V_2')|<\varepsilon}$.
\end{fact}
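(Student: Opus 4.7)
The plan is to derive both conclusions by applying the $\varepsilon$-regularity of $B$ itself to the relevant subsets, using that $|V_1'| \geq \alpha |V_1| > \varepsilon |V_1|$ and $|V_2'| \geq \alpha |V_2| > \varepsilon |V_2|$. In particular, the pair $(V_1', V_2')$ is already witnessed to satisfy the definition of $\varepsilon$-regularity \emph{as a subset pair of $B$}, so the density estimate $|d_B(V_1,V_2) - d_B(V_1', V_2')| < \varepsilon$ is immediate. Since $B'$ is just the induced bipartite subgraph on $V_1' \cup V_2'$, we have $d_{B'}(V_1', V_2') = d_B(V_1', V_2')$, which gives the claimed density bound.

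For the regularity of $B'$, the idea is to pass any test pair $X \subset V_1'$, $Y \subset V_2'$ up to the original graph $B$. Suppose $|X| > \varepsilon' |V_1'|$ and $|Y| > \varepsilon' |V_2'|$. Using $\varepsilon' \geq \varepsilon/\alpha$, one gets
\[
|X| > \frac{\varepsilon}{\alpha}\cdot \alpha |V_1| = \varepsilon |V_1|,
\]
and analogously $|Y| > \varepsilon |V_2|$. So $\varepsilon$-regularity of $B$ yields $|d_B(X,Y) - d_B(V_1,V_2)| < \varepsilon$. Combining this with the density estimate from the first step via the triangle inequality,
\[
|d_{B'}(X,Y) - d_{B'}(V_1',V_2')| = |d_B(X,Y) - d_B(V_1',V_2')| < 2\varepsilon \leq \varepsilon',
\]
which is precisely $\varepsilon'$-regularity of $B'$.

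There is essentially no obstacle here; the entire content of the fact is the bookkeeping of how two regularity applications compose. The only subtlety worth flagging is that the hypothesis $\alpha > \varepsilon$ is exactly what makes $(V_1', V_2')$ qualify as a valid test pair inside $B$, which is needed for the density conclusion, and that the choice $\varepsilon' = \max\{\varepsilon/\alpha, 2\varepsilon\}$ simultaneously accommodates the rescaling of the ``large subset'' threshold and the doubling incurred by the triangle inequality.
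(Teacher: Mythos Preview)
Your proof is correct and is precisely the standard argument the paper has in mind: the paper does not spell out a proof of this fact, stating only that it follows easily from the definition of $\varepsilon$-regularity, and your two applications of regularity plus the triangle inequality are exactly that derivation.
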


\begin{fact}\label{fact:SuperSlicing}
Consider a graph $G=(V,E)$ with an $(\eps,d)$-regular partition $(V_i)_{i\in[s]}$ of $V$ with $|V_i|=m$ for $1\leq 
i\leq s$. Let $T$ be a graph on vertex set $[s]$ contained in the corresponding
$(\eps,d)$-reduced graph of $(V_i)_{i\in[s]}$ and let $M$ be a matching contained in~$T$. Then for each vertex $i$ of 
$M$, the associated set $V_i$ 
in $G$ contains a
subset $V_i'$ of size $(1-\varepsilon r)m$ such that for every edge $ij$ of $M$ the bipartite graph 
$(V_i',V_j';E_G(V_i',V_j'))$ is 
$(\varepsilon/(1-\varepsilon r),
d-(1+r)\varepsilon)$-super-regular.
\end{fact}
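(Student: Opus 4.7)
The plan is a standard slicing / super-regularisation: for each vertex $i$ covered by $M$, discard from $V_i$ the vertices that have atypically few neighbours in some cluster $V_j$ with $ij\in E(T)$, and then verify that on each restricted $M$-pair the super-regularity condition holds with the advertised parameters. Throughout I read the ``$r$'' in the statement as the maximum degree of $T$, which is how the bound $(1-r\eps)m$ on $|V_i'|$ naturally arises.

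First I would observe that for every $ij\in E(T)$ the set $B_i^j\subseteq V_i$ of vertices with fewer than $(d-\eps)m$ neighbours in $V_j$ satisfies $|B_i^j|\leq \eps m$. Otherwise, taking $X=B_i^j$ and $Y=V_j$ in the definition of $\eps$-regularity would give $d_G(V_i,V_j)-d_G(B_i^j,V_j)>\eps$, since $d_G(V_i,V_j)\geq d$ and $d_G(B_i^j,V_j)<d-\eps$, contradicting the $\eps$-regularity of the pair $(V_i,V_j)$.

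Next I would set $V_i':=V_i\setminus\bigcup_{j\colon ij\in E(T)} B_i^j$ for each vertex $i$ covered by $M$. Since $i$ has at most $r$ neighbours in $T$ and each $B_i^j$ has size at most $\eps m$, this removes at most $r\eps m$ vertices and hence leaves $|V_i'|\geq (1-r\eps)m$. By construction, every $v\in V_i'$ has at least $(d-\eps)m$ neighbours in each $V_j$ with $ij\in E(T)$.

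Finally, for each $ij\in E(M)$ I would verify the two conditions of $(\eps/(1-r\eps),d-(1+r)\eps)$-super-regularity for $(V_i',V_j';E_G(V_i',V_j'))$. The regularity, with parameter at most $\eps/(1-r\eps)$ and density within $\eps$ of the original (hence at least $d-\eps$), is immediate from Fact~\ref{fact:Slicing} applied with $\alpha=1-r\eps$. For the minimum-degree condition, any $v\in V_i'$ has at least $(d-\eps)m$ neighbours in $V_j$, of which at most $r\eps m$ lie in $V_j\setminus V_j'$; hence $v$ retains at least $(d-(1+r)\eps)m\geq (d-(1+r)\eps)|V_j'|$ neighbours in $V_j'$, and the symmetric bound holds for vertices of $V_j'$. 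The argument is essentially routine, the ``hardest'' part being the coordination of the two parameter degradations; there is no genuine conceptual obstacle.
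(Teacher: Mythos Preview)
Your argument is correct and is exactly the standard clean-up the authors intend: the paper does not actually prove Fact~\ref{fact:SuperSlicing}, noting only that it and Fact~\ref{fact:Slicing} ``follow easily from the definitions of regular and super-regular pairs.'' Your reading of the otherwise undefined symbol $r$ as $\Delta(T)$ is the natural one and makes the stated bounds come out right; the only quibble is that invoking Fact~\ref{fact:Slicing} with $\alpha=1-r\eps$ literally yields regularity parameter $\max\{\eps/(1-r\eps),2\eps\}$ rather than $\eps/(1-r\eps)$, but this looseness is already present in the paper's formulation and is harmless in every application.
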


\subsection{Regular blow-up of a tree}\label{subsec:blowtree}

In this section we show, in Lemma~\ref{lemma:FindTheNiceTree}, that for any coloring of $E(K_N)$ there exists a dense, 
regular, monochromatic subgraph of $K_{N}$ with some structural 
properties that allow us to embed
$H$ into this subgraph. Here the
notion of a connected matching in the reduced graph (originating in \cite{Lu99}, see also 
\cites{FiLu07,GyRuSaSz07b,GyRuSaSz07,HaLuPeRoRuSiSk06}) plays 
a central role. A {\em connected matching}
in a graph $R$ is a matching~$M$ such that all edges of~$M$ are in
the same connected component of $R$. The following lemma, proved in~\cite{FiLu07}, states that in
a $3$-colored almost complete graph we can always find a connected matching that
covers almost half of the vertices and it is contained in a monochromatic tree. 

\begin{lemma}\label{lemma:Matching}
For every $\delta>0$ there exist an $\eps_0 >0$ and a natural
number $k_0$ such that for every $\eps < \eps _0$ and $k\geq
k_0$ and for every 3-edge colored graph $R$ on
$k$ vertices with density at least $(1-\eps)$ there exists a matching $M$ with at least $(1-\delta)k/4$ edges in $R$ 
that is contained in a 
monochromatic tree $T\subset R$.
\end{lemma}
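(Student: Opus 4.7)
I would adapt the argument of Figaj and {\L}uczak~\cite{FiLu07}, which combines Gy\'arf\'as's monochromatic connectivity theorem with a $2$-color connected matching lemma of {\L}uczak. Let me set $\alpha = (1-\delta)/4$ and aim to produce a monochromatic connected matching of size at least $\alpha k$; we may choose $\eps_0 \ll \delta$ and $k_0 \gg \delta^{-1}$ freely.

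\emph{Step 1 (Gy\'arf\'as's theorem).} I would first invoke a near-complete version of Gy\'arf\'as's theorem for $3$ colors: any $3$-edge-coloring of a graph of density at least $1-\eps$ on $k$ vertices contains a monochromatic connected subgraph on at least $(1 - \eta)k/2$ vertices, where $\eta = \eta(\eps)\to 0$ as $\eps \to 0$. (This is proved by first completing $R$ arbitrarily to~$K_k$, invoking the classical Gy\'arf\'as result, then discarding the few vertices incident to many missing edges.) So we obtain a monochromatic, say red, connected subgraph on a vertex set $C \subseteq V(R)$ with $|C| \geq (1 - \eta)k/2$.

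\emph{Step 2 (search for a near-perfect matching in the component).} If $G_1[C]$ contains a matching of size at least $\alpha k$, we are done: pick any spanning tree~$T$ of the red connected subgraph of $G_1[C]$; then $T$ is a monochromatic tree containing the required matching. Otherwise, the maximum matching in $G_1[C]$ has size $m < \alpha k$, and the set $I \subseteq C$ of vertices not covered by a maximum matching is independent in $G_1$ and has $|I| \geq |C| - 2m$. Writing $X = I \cup (V(R)\setminus C)$, we have $|X| = |I| + (k - |C|) \geq k - 2m > (1+\delta)k/2$. Crucially, no edge of color $1$ lies inside $I$ (by independence) or between $I$ and $V(R)\setminus C$ (since $C$ is a red component); thus, except for red edges inside the other red components of $V(R)\setminus C$, every edge of $R[X]$ is of color~$2$ or~$3$.

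\emph{Step 3 (the 2-color finish and contradiction).} In the second case of Step~2, I would apply {\L}uczak's $2$-color connected matching lemma to the colors~$2$ and~$3$ inside $R[X]$, after first deleting the red edges within $V(R)\setminus C$. These deleted red edges are confined to red components other than~$C$, each of which, under the assumption that no monochromatic connected matching of size $\alpha k$ exists, has matching number less than $\alpha k$ and hence at most $\alpha k \cdot |V(R)\setminus C|$ edges in total by Erd\H{o}s--Gallai; this is absorbed into the $o(1)$ error of {\L}uczak's lemma. The output is a connected monochromatic matching in color~$2$ or~$3$ of size at least $(1 - o(1))|X|/2 \geq (1-\delta)k/4$, contained in its monochromatic component, a spanning tree of which supplies the required~$T$. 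This contradicts the assumption that no such matching exists, so case~(i) of Step~2 must hold.

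\emph{Main obstacle.} The delicate regime is when $|C|$ is close to $k/2$: then the red-independent set $I \subseteq C$ has size only $\Theta(\delta k)$, so one must really combine it with $V(R)\setminus C$ to reach a set of size $(1+\delta)k/2$ on which the $2$-color matching lemma yields a matching of size $(1-\delta)k/4$. The tightness of the bound $(1-\delta)k/4$ (attained by the affine plane of order $3$) forces one to carefully track all three sources of loss---the $\eps$ from near-completeness, the $\eta$ from the robust version of Gy\'arf\'as, and the $o(1)$ in {\L}uczak's lemma---and absorb them into~$\delta$ by choosing the constants $\eps_0$ and $k_0$ appropriately.
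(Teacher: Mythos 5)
The paper offers no proof of this lemma: it is quoted verbatim from Figaj and {\L}uczak~\cite{FiLu07} (with a stronger stability version in~\cite{GyRuSaSz07b}), so what you are really attempting is a reconstruction of their argument. Your Step~1 is correct and is indeed where such proofs start, but Step~3 rests on a misstatement of the two-color connected matching lemma, and this is fatal. For a $2$-colored graph of density $1-o(1)$ on $N$ vertices, {\L}uczak's lemma produces a monochromatic connected matching of size $(1/3-o(1))N$, not $(1/2-o(1))N$: it is the connected-matching analogue of $R(P_n,P_n)=\lfloor(3n-2)/2\rfloor$, and the two-class extremal coloring (one color inside classes of sizes $N/3$ and $2N/3$, the other color between them) caps every monochromatic connected matching at about $N/3$. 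Applied to your set $X$ with $|X|\approx(1+\delta)k/2$ this yields only about $k/6$, well short of the required $(1-\delta)k/4$. This is not a matter of tracking constants more carefully: any reduction of the shape ``take the Gy\'arf\'as component, match what you can inside it, hand the leftover to the two-color lemma'' plateaus at $k/6$, whereas the correct threshold $k/4$ is tight for the four-class coloring induced by the $1$-factorization of $K_4$ (the affine plane of order $2$, not $3$). The actual proofs exploit the three-color structure globally; for instance, \cite{GyRuSaSz07b} shows that either some color already has a connected matching covering more than half the vertices, or the coloring is close to one of two explicit extremal configurations, in each of which a connected matching of size $(1-\delta)k/4$ is exhibited directly.

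A second, independent error is the claim that the red edges inside $V(R)\setminus C$ can be ``absorbed into the $o(1)$ error.'' If every red component $D\subseteq V(R)\setminus C$ has matching number below $\alpha k$, Erd\H{o}s--Gallai bounds its size only by roughly $\alpha k\,|D|$, and summing over components gives up to $\alpha k\cdot|V(R)\setminus C|\approx k^2/8$ red edges --- a constant fraction of $\binom{|X|}{2}$, not $o(k^2)$. Concretely, $V(R)\setminus C$ may contain a red complete bipartite graph with parts of sizes roughly $(1-2\delta)k/4$ and $(1+2\delta)k/4$: this is a single red component whose matching number is below $(1-\delta)k/4$, yet it has $\Theta(k^2)$ edges. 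After deleting them, $R[X]$ has density bounded away from $1$, so the two-color lemma (which requires density $1-o(1)$) cannot be invoked at all.
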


This lemma can be found in a stronger structural form in
\cite{GyRuSaSz07b}. In fact, there it is proved that either
there is a monochromatic connected matching covering more than
half of the vertices, or the graph $R$ is close to one of two
extremal cases. It is not hard to see that in both extremal cases there is a monochromatic connected
matching $M$ of size at least $(1-\delta)|V(R)|/4$. We will also make use of the following simple fact.
\begin{fact}\label{fact:TreeEvenDistance}
If a tree $T$ contains a matching $M$ with $\ell$ edges then the
vertices covered by the matching can be labelled in such way that ${E_M=\{x_i y_i\colon i=1,\dots,\ell\}}$ and $x_i$ 
and $x_j$ are at an even distance 
in $T$ for all
$1\leq i<j\leq \ell$.
\end{fact}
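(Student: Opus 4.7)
The plan is to use the fact that every tree is bipartite and exploit the natural $2$-coloring by distance parity. Specifically, I would fix an arbitrary root $r \in V(T)$ and define $\chi\colon V(T)\to\{0,1\}$ by $\chi(v) = d_T(r,v) \bmod 2$. Since in a tree the unique path between two vertices $u,v$ has length $d_T(u,v)$, and $d_T(u,v) \equiv \chi(u)+\chi(v) \pmod{2}$, two vertices are at even distance in $T$ if and only if they lie in the same class of $\chi$.

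Now for every edge $e = uv$ of $T$ (in particular every edge of $M \subset T$) we have $\chi(u) \neq \chi(v)$, because $T$ is bipartite with bipartition given by~$\chi$. Hence for each of the $\ell$ matching edges, exactly one endpoint has color~$0$ and the other has color~$1$. For $i = 1, \dots, \ell$, let $x_i$ denote the endpoint of the $i$-th matching edge with $\chi(x_i) = 0$, and let $y_i$ denote the other endpoint. Then $E_M = \{x_iy_i : i=1,\dots,\ell\}$, and by the observation above, $d_T(x_i,x_j)$ is even for all $1 \le i < j \le \ell$, since $\chi(x_i) = \chi(x_j) = 0$.

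There is essentially no obstacle here: the only content is noticing that trees are bipartite and that the distance-parity $2$-coloring coincides with this bipartition, so a consistent choice of ``even-side'' endpoint in each matching edge yields the required labelling.
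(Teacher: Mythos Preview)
Your proposal is correct and follows essentially the same argument as the paper: take a proper $2$-coloring of the tree, let the $x_i$ be the matching endpoints in one color class and the $y_i$ the others, and observe that vertices in the same class are at even distance. The only difference is that you spell out the coloring explicitly via distance parity from a root, whereas the paper simply invokes a proper two-coloring; this is a cosmetic elaboration, not a different approach.
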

Indeed, consider a proper two-coloring $\chi\colon\, V(T) \to [2]$. Label
those endpoints of the matching edges with $x_i$ that are in
$\chi^{-1}(1)$ and label the other endpoints by $y_i$. Clearly,
the distance in $T$ between any $x_i$ and $x_j$ is even, since
they belong to the same color class.

Given a coloring $\chi_{K_N}\colon\, E(K_{n})\to[3]$, we denote by $G_1$ the spanning subgraph of $K_n$ such that 
$ij\in E(G_1)$ if and only if $\chi_{K_N}(ij)=1$.

\begin{lemma}\label{lemma:FindTheNiceTree}
For every $\gamma> 0$ there exists an $\eps_0>0$ such that for every
$\eps\in(0,\eps_0)$, there exists a natural number $K_0$ such that for
all $N=(2+\gamma)n \geq K_0$ and for every coloring $\chi_{K_N}\colon E(K_{N})\to[3]$, there exist a color (say color 
1), integers $\ell,\ell',k$ with 
$\ell,\ell'\leq k \leq
K_0$ and $\ell\geq (1-\gamma/4)k/4$, a tree $T$ on vertex set
$\{x_1,\dots,x_\ell,y_1,\dots,y_\ell,z_1,\dots,z_{\ell'}\}$ containing a matching $M$ with edge set $E_M=\{x_i 
y_i\colon i=1,\dots,\ell\}$ with an 
even distance in $T$
between any $x_i$ and $x_j$ for all $i$ and $j$, such that there exists a partition $(V_i)_{i\in[k]}$ of $V(K_N)$ such 
that $G_1$ is 
$(\eps,1/3)$-regular on $T$ and ${|V_1|=\ldots=|V_k|\geq
(1-\eps)N/k}$.
\end{lemma}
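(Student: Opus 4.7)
The plan is to combine the three-color Regularity Lemma (Lemma~\ref{lemma:Regularity}) with the connected-matching result of Figaj--{\L}uczak (Lemma~\ref{lemma:Matching}) applied to an auxiliary 3-coloring of the reduced graph. The choice of constants is: given $\gamma>0$, apply Lemma~\ref{lemma:Matching} with $\delta=\gamma/4$ to obtain $\eps_0^{\text{Mat}}$ and $k_0^{\text{Mat}}$, set $\eps_0:=\eps_0^{\text{Mat}}$, and then given $\eps\in(0,\eps_0)$ let $K_0$ be the constant produced by Lemma~\ref{lemma:Regularity} with input parameters $\eps$ and $k_0:=k_0^{\text{Mat}}$.

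Given an arbitrary 3-coloring $\chi_{K_N}\colon E(K_N)\to[3]$ with $N\geq K_0$, let $G_1,G_2,G_3$ denote the three spanning monochromatic subgraphs. Apply Lemma~\ref{lemma:Regularity} to $G_1,G_2,G_3$ to obtain a partition $V=V_0\cup V_1\cup\dots\cup V_k$ with $k_0\leq k\leq K_0$, $|V_1|=\dots=|V_k|\geq (1-\eps)N/k$, $|V_0|<\eps N$, and at most $\eps\binom{k}{2}$ pairs that fail to be $\eps$-regular in some color. Define an auxiliary 3-edge-colored graph $\tilde R$ on vertex set $[k]$: declare $ij$ an edge of $\tilde R$ if and only if the pair $(V_i,V_j)$ is $\eps$-regular in each of $G_1,G_2,G_3$, and assign to it a color $c\in[3]$ for which $d_{G_c}(V_i,V_j)\geq 1/3$ (such $c$ exists since the three densities sum to~$1$). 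Then $\tilde R$ has at least $(1-\eps)\binom{k}{2}$ edges.

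Applying Lemma~\ref{lemma:Matching} to $\tilde R$ with $\delta=\gamma/4$ produces a monochromatic matching $M$ with $\ell\geq (1-\gamma/4)k/4$ edges contained in a monochromatic tree $T\subset \tilde R$. Relabelling colors if necessary, suppose this monochromatic color is~$1$. By the very definition of $\tilde R$, for every edge $ij\in E(T)$ the pair $(V_i,V_j)$ is $(\eps,1/3)$-regular in $G_1$, so the partition $(V_i)_{i\in[k]}$ is $(\eps,1/3)$-regular on $T$ in $G_1$, as required. Finally, invoke Fact~\ref{fact:TreeEvenDistance} to label the matching endpoints as $x_1,y_1,\dots,x_\ell,y_\ell$ so that $x_i$ and $x_j$ lie at an even distance in $T$ for all $i,j$, and label the remaining $\ell'=|V(T)|-2\ell$ vertices of $T$ as $z_1,\dots,z_{\ell'}$; the bounds $\ell\leq k/2$ and $\ell'\leq k\leq K_0$ are immediate.

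Since every step is a direct invocation of an earlier tool, there is no real technical obstacle. The only point that requires a bit of care is the construction of the coloring of $\tilde R$: one must ensure that whichever color Lemma~\ref{lemma:Matching} ultimately highlights carries density at least $1/3$ on each selected pair, which is why we use the majority-color (density~$\geq 1/3$) assignment rather than simply picking the densest color. After that, matching up the output of Lemma~\ref{lemma:Matching} with the labeling provided by Fact~\ref{fact:TreeEvenDistance} yields the desired tree $T$ and matching $M$ with the prescribed even-distance property.
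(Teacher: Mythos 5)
Your proposal is correct and follows essentially the same route as the paper: apply the three-color Regularity Lemma, build the reduced graph on the pairs that are $\eps$-regular in all three colors, color each reduced edge by a majority color (which guarantees density at least $1/3$ in that color), invoke Lemma~\ref{lemma:Matching} to find a large connected matching inside a monochromatic tree, and finish with Fact~\ref{fact:TreeEvenDistance} to obtain the even-distance labelling. The order of quantifiers in your choice of constants ($\eps_0$ from Lemma~\ref{lemma:Matching} first, then $K_0$ from the Regularity Lemma for the given $\eps$) also matches the paper exactly.
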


\begin{proof}
Fix $\gamma>0$ and set $\delta=\gamma/4$. Let $\varepsilon_0$ and $k_0$ be the constants obtained from Lemma~\ref{lemma:Matching} applied with $\delta$. 
Fix $\varepsilon<\varepsilon_0$ and let $K_0$ be
obtained by an 
application of the Regularity Lemma (Lemma \ref{lemma:Regularity}) with parameters $\varepsilon$ and $k_0$. Finally let 
${N=(2+\gamma)n\geq K_0}$ be 
given.

Consider an arbitrary $3$-coloring $\chi_{K_N}\colon E(K_{N})\to[3]$ of the edges of $K_{N}$ and spanning subgraphs 
$G_1$, $G_2$ and $G_3$ of $K_{N}$ 
where $ij\in G_s$ if and only if $\chi_{K_N}(ij)=s$,
for $s=1,2,3$. Owing to the Regularity Lemma, there is a partition $V_0,V_1,\ldots,V_k$ of the vertices of $K_{N}$ such 
that $|V_i|=m\geq 
(1-\varepsilon)N/k$ 
for $1\leq i\leq k$ and more than $(1-\varepsilon){k\choose 2}$ pairs $\{V_i,V_j\}$ for $1\leq i<j\leq k$ are 
$\varepsilon$-regular in $G_1$, 
$G_2$ and $G_3$, where $k_0\leq k\leq
K_0$.

We define the following reduced graph: let $R$ be the graph with vertex set $[k]$, which contains the edge $ij$ if and only if 
$\{V_i,V_j\}$ is 
$\varepsilon$-regular in each of $G_1$, $G_2$ and $G_3$.
Thus, ${|E(R)|\geq (1-\varepsilon){k\choose 2}}$. We know that $R$ is a graph on $k$ vertices with density 
at least $(1-\varepsilon)$. Now
we define a coloring $\chi_R\colon E(R)\to[3]$ of the edges of $R$ such that $\chi_R(i,j)=s$ if $s\in[3]$ is the 
biggest integer in $[3]$ such that 
$|E_{G_s}(V_i,V_j)| \geq |E_{G_r}(V_i,V_j)|$ for $1\leq r\leq 3$,
i.e., the edge~$ij$ receives one of the colors that appears in most edges of $E_{K_N}(V_i,V_j)$ with respect to the 
coloring $\chi_{K_N}$ of 
$E(K_{N})$. If $\chi_R(ij)=s$, then
$|E_{G_s}(V_i,V_j)|\geq |V_i||V_j|/3$.

Since $k\geq k_0$ and the density of~$R$ is at least $(1-\varepsilon)$, by Lemma \ref{lemma:Matching}, 
we know that~$R$ 
contains a monochromatic tree 
$T$ that contains a
matching $M$ of size $\ell\geq(1-\delta)k/4$. Without loss of generality we may assume that the edges of $T$ are 
colored with color~$1$. By 
Fact~\ref{fact:TreeEvenDistance} we can label $M=(\{x_i,y_i\})_i$ such
that $x_i$ and $x_j$ are at even distance in $T$ for every $1\leq i<j\leq \ell$.

Let $\{z_1,\ldots,z_{\ell'}\}$ be the vertices of $T$ that are not covered by edges of the matching~$M$. Since all the 
edges of $T$ are present in $R$ 
we know that, for all $ij\in
E(T)$, the pairs~$\{V_i,V_j\}$ are $\varepsilon$-regular in $G_1$ with $|E_{G_1}(V_i,V_j)|\geq |V_i||V_j|/3$. Thus 
we are done, since we can consider the graph composed of the classes $V_i$ for
every $i\in V(T)$ and with edge set $E_{G_1}(V_i,V_j)$ between every pair.                                            
\end{proof}

\subsection{Balanced intervals}\label{subsec:balBlocks}

By definition, given $\beta>0$ and a natural $\Delta$, a balanced $(\beta,\Delta)$-graph $H$ has a 2-coloring of its 
vertices that uses both colors 
similarly often \emph{in total}, 
but this does not have to be true \emph{locally}. In this section we show how to balance $H$ so that the two colors 
appear in approximately the same 
number of vertices also locally.

Given a graph $H=(W,E)$ with $W=\{w_1,\ldots,w_n\}$, let ${\chi\colon W\to[2]}$ be a $2$-coloring. Define the function 
$C_i$ such that if $W'\subset W$ 
then, for $i=1,2$, we have ${C_i(W')=|\chi^{-1}(i)\cap W'|}$. We
say that $\chi$ is a $\beta$\emph{-balanced} coloring of $W$ if $1-\beta\leq C_1(W)/C_2(W)\leq 1+\beta$. A subset 
$I\subset W$ is called 
\emph{interval} if there exists $p<q$ such that
$I=\{w_p,w_{p+1},\ldots,w_q\}$.
Finally, let~$\ell'$ and $\hat\ell$ be positive integers with $\ell'\leq \hat\ell$ and let 
$\sigma\colon[\ell']\to[\hat\ell]$ be an injection. 
Consider a partition of $W$ into a set of intervals
$\mathcal{I}=\{I_1,\ldots,I_{\hat\ell}\}$. We define $C_i(\mathcal{I},\sigma,a)=\sum_{j=1}^{a} C_i(I_{\sigma(j)})$ for 
$i=1,2$. If it is clear what 
partition we are considering then we write $C_i(\sigma,a)$ for simplicity.

Given a graph $H=(W,E)$, let $c\colon W\to[2]$ be a coloring of $W$ such that $H$ is globally 
balanced. Roughly speaking, the next lemma states that every partition of~$W$ into intervals of almost the same size 
can be rearranged in some way 
that, after the rearrangement, if we remove the ``last'' intervals, then, in the subgraph of $H$ induced by the 
remaining vertices, the difference 
between the number of vertices $w$ with $c(w)=1$ and those $w$ with $c(w)=2$ is ``small''.

\begin{lemma}\label{lemma:balCol}
For every integer $\hat \ell\geq 1$ there exists $n_0$ such that if $H=(W,E)$ is a graph with $W=\{w_1,\ldots,w_n\}$ 
with $n\geq n_0$, then every 
$\beta$-balanced $2$-coloring $\chi$ of $W$ with
$\beta\leq 2/\hat\ell$, and every partition of $W$ into intervals $I_1,\ldots,I_{\hat\ell}$ with sizes 
${|I_1|\leq\ldots \leq|I_{\hat\ell}|\leq |I_1|+1}$ there 
exists a permutation
$\sigma\colon[\hat\ell]\to[\hat\ell]$ such that for every $1\leq i \leq \hat\ell$ we have
\begin{equation*}
\left|C_1(\sigma,i) - C_2(\sigma,i)\right|\leq \frac{n}{\hat\ell}+1.
\end{equation*}
\end{lemma}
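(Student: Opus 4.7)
The plan is to construct $\sigma$ by a greedy one-dimensional balancing argument. Set $d_j := C_1(I_j) - C_2(I_j)$ for each $j \in [\hat\ell]$ and let $L := n/\hat\ell + 1$. Since every interval has size at most $\lceil n/\hat\ell \rceil \leq L$, we have $|d_j| \leq L$. A short computation using the identity $C_1(W) + C_2(W) = n$ together with the $\beta$-balanced hypothesis bounds the total $D := \sum_j d_j = C_1(W) - C_2(W)$ by roughly the same order as $L$ (once $n_0$ is taken sufficiently large in terms of $\hat\ell$).

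I would build $\sigma$ inductively: given $\sigma(1), \dots, \sigma(i-1)$, set $S_{i-1} := \sum_{k<i} d_{\sigma(k)}$ and choose $\sigma(i)$ among the unused indices so as to minimize $|S_{i-1} + d_{\sigma(i)}|$. The main claim is that $|S_i| \leq L$ for every $1 \leq i \leq \hat\ell$, which is precisely the inequality asserted by the lemma.

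For the inductive step, assume $|S_{i-1}| \leq L$ and, without loss of generality, $S_{i-1} \geq 0$. If some unused index $j$ satisfies $d_j \leq 0$, then picking such a $j$ yields $S_i \in [S_{i-1} - L,\, S_{i-1}] \subseteq [-L, L]$, since $|d_j| \leq L$. Otherwise every remaining $d_j$ is strictly positive, so their sum equals $D - S_{i-1}$ and must be positive, forcing $S_{i-1} \leq D$; since the smallest of the $\hat\ell - i + 1$ remaining positive values is at most their average $(D - S_{i-1})/(\hat\ell - i + 1)$, taking it keeps $S_i \leq D$, again within $L$.

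The main technical obstacle is establishing $|D| \leq L$: the naive estimate $|D| \leq \beta n$ is loose by up to roughly a factor of two, so one must sharpen it to $|D| \leq \beta n/(2-\beta)$ by exploiting $C_1(W) + C_2(W) = n$, and then pick $n_0$ large enough (depending on $\hat\ell$) so that the additive $+1$ slack in $L$ absorbs the remaining gap. Once $|D|$ is controlled, the greedy step together with the induction closes the argument.
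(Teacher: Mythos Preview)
Your greedy construction is the same inductive strategy as the paper's: at each step pick an unused interval so that the running discrepancy $S_i=C_1(\sigma,i)-C_2(\sigma,i)$ stays within $L=n/\hat\ell+1$. The ``opposite-sign available'' case is handled correctly, and you have correctly isolated the crux of the remaining case: everything reduces to the inequality $|D|\le L$, where $D=C_1(W)-C_2(W)$.

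Your proposed resolution of that obstacle, however, does not go through. From $\beta$-balancedness together with $C_1(W)+C_2(W)=n$ one indeed obtains $|D|\le\beta n/(2-\beta)$, but at the extreme $\beta=2/\hat\ell$ this yields only $|D|\le n/(\hat\ell-1)$, and the shortfall
\[
\frac{n}{\hat\ell-1}-\frac{n}{\hat\ell}=\frac{n}{\hat\ell(\hat\ell-1)}
\]
is of order $n/\hat\ell^{\,2}$ and \emph{grows} with $n$; no choice of $n_0$ lets the additive $+1$ in $L$ absorb it. In fact $|S_{\hat\ell}|=|D|$ for \emph{every} permutation $\sigma$, so the conclusion at $i=\hat\ell$ already forces $|D|\le L$, and this can fail: take $\hat\ell=10$, $n=10^4$, $C_1(W)=4445$, $C_2(W)=5555$; then $C_1/C_2\approx 0.8002\in[0.8,1.2]$, yet $|D|=1110>1001=L$. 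The paper's proof slides over the same point: its ``w.l.o.g.\ $C_1(\sigma,i)>C_2(\sigma,i)$'' presumes a symmetry between the two colours that the $\beta$-balanced hypothesis does not supply (it bounds $C_1/C_2$ above by $1+\beta$, but $C_2/C_1$ only by $1/(1-\beta)>1+\beta$), and the same example breaks the inductive step there as well. In the paper's application $\beta$ is chosen far below $2/\hat\ell$, so the defect is harmless downstream, but for the lemma exactly as stated neither argument closes.
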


\begin{proof}
Fix $\hat\ell\geq 1$ and set $n_0=2{\hat\ell}^3$.  Let $H=(W,E)$ be a graph such that  ${W=\{w_1,\ldots,w_n\}}$ with 
$n\geq n_0$. Fix a $\beta$-balanced 
coloring $\chi$ of
$W$ and a partition of $W$ into intervals $I_1,\ldots,I_{\hat\ell}$ with $|I_1|\leq\ldots \leq|I_{\hat\ell}|\leq 
|I_1|+1$ where $\beta\leq 2/\hat\ell$.

Let us construct the permutation $\sigma$ iteratively. We can take any integer on $[\hat\ell]$ to be~$\sigma(1)$, 
because the size of the 
intervals is at most $n/\hat\ell+1$.
Suppose  $\sigma(1),\ldots,\sigma(i)$ were defined in such a way that $|C_1(\sigma,i) - C_2(\sigma,i)|\leq 
n/\hat\ell+1$, where $i\leq\hat\ell-1$.

If $C_1(\sigma,i)=C_2(\sigma,i)$, then clearly $\sigma(i+1)$ can be defined as being any of the remaining integers on 
$[\hat\ell]$.\ So, w.l.o.g.\ 
assume that $C_1(\sigma,i) = C_2(\sigma,i) + k$, with
${1\leq k\leq n/\hat\ell+1}$. But since $C_1(\sigma,i) + C_2(\sigma,i) \leq i(n/\hat\ell + 1)$, we can conclude that
\begin{equation}\label{eq:C2i}
C_2(\sigma,i) \leq \frac{in}{2\hat\ell} - \frac{k-i}{2}.
\end{equation}

We will prove that there exists some $r\in[\hat\ell] \setminus \bigcup_{j=1}^i \sigma(j)$ with ${C_2(I_{r})\geq k/2}$. 
Suppose by contradiction that 
$C_2(I_{r})<k/2$ for all integers ${r\in[\hat\ell] \setminus \bigcup_{j=1}^i \sigma(j)}$. This fact together with 
\eqref{eq:C2i} implies the 
following.
\begin{equation}\label{eq:C2l}
\begin{aligned}
C_2(W) &\leq C_2(\sigma,i) + (\hat\ell - i)\frac{k}{2}=\frac{in}{2\hat\ell} + (\hat\ell-i-1)\frac{k}{2} + \frac{i}{2}\\
&\leq\left(\frac{\hat\ell-1}{\hat\ell}\right)\frac{n}{2}+\frac{\hat\ell}{2}=\left(\frac{n(\hat\ell-1)+\hat\ell^{\,2}}{n\hat\ell}\right)\frac{n}{2},
\end{aligned}
\end{equation}
where the last inequality holds because $k\leq n/\hat\ell + 1$ and $i\leq \hat\ell-1$.

Since $C_1(W) + C_2(W) = n$, using \eqref{eq:C2l} we know that 
\begin{equation*}
\frac{C_1(W)}{C_2(W)} = \frac{n}{C_2(W)} - 1\geq 1 + \frac{2(n-{\hat\ell}^{\,2})}{n(\hat\ell-1)+{\hat\ell}^{\,2}}
>1+\beta,
\end{equation*}
where the last inequality follows by the choice of $n_0$, because $\beta\leq 2/\hat\ell$. But this contradicts the 
$\beta$-balancedness of the 
coloring $\chi$ of $W$. Therefore, there
exists $r\in[\hat\ell] \setminus \bigcup_{j=1}^i \sigma(j)$ with~$C_2(I_{r})\geq k/2$. Set $\sigma(i+1)=r$. Then
\begin{equation*}
\begin{aligned}
C_1(\sigma,i+1) &= C_1(\sigma,i) + C_1(I_{r})\leq \left(C_2(\sigma,i) + k\right) + \left(\frac{n}{\hat\ell} +1 - \frac{k}{2}\right)\\
&=\left( C_2(\sigma,i) + \frac{k}{2}\right) + \frac{n}{\hat\ell}+1\leq C_2(\sigma,i+1) + \frac{n}{\hat\ell}+1.
\end{aligned}
\end{equation*}
Since $C_1(\sigma, i+1)\geq C_2(\sigma,i+1) - (n/\hat\ell+1)$, we conclude from the lat inequality that 
$|C_1(\sigma,i+1) - C_2(\sigma,i+1)|\leq 
n/\hat\ell+1$.
\end{proof}

Let $H=(W,E)$ be a graph with $W=\{w_1,\ldots,w_n\}$ and let $\chi\colon W\to[2]$ be a coloring of $W$. Consider a 
partition of $W$ into a set of 
intervals
$\mathcal{I}=\{I_1,\ldots,I_{\hat\ell}\}$. We define $C_i(I,\sigma,a,b)=\sum_{j=a}^{b} C_i(I_{\sigma(j)})$ for $i=1,2$. 
If it is clear what partition 
we are considering then we write $C_i(\sigma,a,b)$
for simplicity.

\begin{corollary}\label{cor:cor}
For every integer $\hat \ell\geq 1$ there exists $n_0$ such that if $H=(W,E)$ is a graph with $W=\{w_1,\ldots,w_n\}$ 
with $n\geq n_0$, then every 
$\beta$-balanced $2$-coloring $\chi$
of $W$ with 
$\beta\leq 2/\hat\ell$, and every partition of $W$ into intervals ${I_1,\ldots,I_{\hat\ell}}$ with sizes 
${|I_1|\leq\ldots \leq|I_{\hat\ell}|\leq |I_1|+1}$ there 
exists a permutation
$\sigma\colon[\hat\ell]\to[\hat\ell]$ such that for every pair of integers $1\leq a<b\leq \hat\ell$,
\begin{equation}
|C_1(\sigma,a,b) - C_2(\sigma,a,b)|\leq 2\left(\frac{n}{\hat\ell}+1\right).
\end{equation}
\end{corollary}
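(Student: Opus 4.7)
The plan is to deduce the corollary directly from Lemma~\ref{lemma:balCol} via the telescoping identity
\[
C_i(\sigma,a,b)\;=\;C_i(\sigma,b)-C_i(\sigma,a-1),
\]
with the convention $C_i(\sigma,0)=0$, which holds for $i=1,2$ by definition of the partial sums.

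Fix $\hat\ell\ge 1$ and let $n_0$ be the constant produced by Lemma~\ref{lemma:balCol}. Given $H$, $\chi$ and the interval partition $I_1,\dots,I_{\hat\ell}$ as in the statement, apply Lemma~\ref{lemma:balCol} to obtain a permutation $\sigma\colon[\hat\ell]\to[\hat\ell]$ such that
\[
\bigl|C_1(\sigma,i)-C_2(\sigma,i)\bigr|\;\le\;\frac{n}{\hat\ell}+1\qquad\text{for every }1\le i\le\hat\ell.
\]

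Now fix $1\le a<b\le\hat\ell$. Using the telescoping identity and the triangle inequality,
\[
\bigl|C_1(\sigma,a,b)-C_2(\sigma,a,b)\bigr|
\;\le\;\bigl|C_1(\sigma,b)-C_2(\sigma,b)\bigr|+\bigl|C_1(\sigma,a-1)-C_2(\sigma,a-1)\bigr|,
\]
where the second term is $0$ if $a=1$ and is otherwise bounded by $n/\hat\ell+1$ by the inequality above. In either case both terms are at most $n/\hat\ell+1$, so the sum is at most $2(n/\hat\ell+1)$, as required. There is no real obstacle here; the only thing to be careful about is the boundary case $a=1$, which is handled by the convention $C_i(\sigma,0)=0$.
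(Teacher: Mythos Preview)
Your proof is correct and follows essentially the same approach as the paper: both take $n_0$ and $\sigma$ from Lemma~\ref{lemma:balCol}, use the telescoping identity $C_i(\sigma,a,b)=C_i(\sigma,b)-C_i(\sigma,a-1)$, and bound each of the two resulting prefix differences by $n/\hat\ell+1$. The only cosmetic difference is that the paper assumes without loss of generality that $C_1(\sigma,a,b)\ge C_2(\sigma,a,b)$ and bounds directly, whereas you apply the triangle inequality; your explicit handling of the boundary case $a=1$ is a small clarification over the paper's version.
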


\begin{proof}
Fix $\hat\ell\geq 1$ and let $n_0$ be obtained from Lemma \ref{lemma:balCol} applied with $\hat\ell$.  Let $H=(W,E)$ be 
a graph with 
$W=\{w_1,\ldots,w_n\}$ with $n\geq n_0$. Now fix a \text{$\beta$-balanced} $2$-coloring $\chi$ of~$W$ and a partition 
of $W$ into intervals 
$I_1,\ldots,I_{\hat\ell}$ with ${|I_1|\leq\ldots \leq|I_{\hat\ell}|\leq |I_1|+1}$, where $\beta\leq 2/\hat\ell$.

Let $\sigma$ be the permutation given by Lemma \ref{lemma:balCol}. Fix integers $1\leq a<b\leq \hat\ell$ and suppose 
w.l.o.g.\ that 
$C_1(\sigma,a,b)\geq C_2(\sigma,a,b)$. Therefore
\begin{equation*}
\begin{aligned}
C_1(\sigma,a,b) &= C_1(\sigma,b) - C_1(\sigma,a-1)\\
&\leq (C_2(\sigma,b) + n/\hat\ell + 1) - (C_2(\sigma,a-1) - (n/\hat\ell+1))\\
&=C_2(\sigma,a,b) + 2(n/\hat\ell + 1).
\end{aligned}
\end{equation*}
\end{proof}

The next result, the main result of this subsection, guarantees the local balancedness that we need.

\begin{lemma}\label{lemma:balancedVectors}
For every $\xi>0$ and every integer $\hat \ell\geq 1$ there exists $n_0$ such that if $H=(W,E)$ is a graph with 
$W=\{w_1,\ldots,w_n\}$ with $n\geq 
n_0$, then for every \text{$\beta$-balanced} $2$-coloring $\chi$ of $W$ with
$\beta\leq 2/\hat\ell$, and every partition of $W$ into intervals $I_1,\ldots,I_{\hat\ell}$ with $|I_1|\leq\ldots 
\leq|I_{\hat\ell}|\leq |I_1|+1$ there 
exists a permutation
$\sigma\colon[\hat\ell]\to[\hat\ell]$ such that for every pair of integers $1\leq a<b\leq \hat\ell$ with $b-a\geq 
7/\xi$, we have
\begin{equation*}
|C_1(\sigma,a,b)-{C_2(\sigma,a,b)}|\leq \xi {C_2(\sigma,a,b)},
\end{equation*}
\end{lemma}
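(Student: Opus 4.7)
The plan is to deduce Lemma~\ref{lemma:balancedVectors} from Corollary~\ref{cor:cor} by converting the absolute difference bound into a relative one, using that ranges of length at least $7/\xi$ must contain many vertices.

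First I would apply Corollary~\ref{cor:cor} with parameter $\hat\ell$ to the given balanced coloring $\chi$ and the given interval partition. Choose $n_0$ to be the maximum of the threshold from the corollary and a larger quantity (on the order of $\hat\ell^2/\xi$, to be fixed at the end). This yields a permutation $\sigma\colon[\hat\ell]\to[\hat\ell]$ such that for every $1\leq a<b\leq \hat\ell$,
\begin{equation*}
|C_1(\sigma,a,b)-C_2(\sigma,a,b)|\leq 2\bigl(n/\hat\ell+1\bigr).
\end{equation*}
I claim this same $\sigma$ works for the lemma.

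Next, I would lower bound $C_2(\sigma,a,b)$. Since $|I_1|\leq\cdots\leq|I_{\hat\ell}|\leq|I_1|+1$ and the intervals partition $W$, each $|I_j|\geq \lfloor n/\hat\ell\rfloor\geq n/\hat\ell-1$. Summing over the $b-a+1$ intervals indexed by $\sigma(a),\ldots,\sigma(b)$,
\begin{equation*}
C_1(\sigma,a,b)+C_2(\sigma,a,b)\geq (b-a+1)(n/\hat\ell-1).
\end{equation*}
Combining with the absolute bound gives
\begin{equation*}
C_2(\sigma,a,b)\geq \tfrac{1}{2}\bigl[(b-a+1)(n/\hat\ell-1)-2(n/\hat\ell+1)\bigr]
  =\tfrac{1}{2}\bigl[(b-a-1)(n/\hat\ell)-(b-a)-3\bigr].
\end{equation*}

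Finally, for ranges with $b-a\geq 7/\xi$, I would show that the relative inequality follows. Writing $M=n/\hat\ell$ and $k=b-a$, the target inequality $|C_1-C_2|\leq \xi C_2(\sigma,a,b)$ reduces (via the two displays above) to showing $2(M+1)\leq \xi\cdot\tfrac12[(k-1)M-k-3]$, i.e.\ $\xi(k-1)M\geq 4M+4+\xi(k+3)$. Since $k\geq 7/\xi$ we have $\xi(k-1)M\geq 7M-\xi M\geq 6M$ (assuming w.l.o.g.\ $\xi\leq 1$, else the conclusion is immediate from Corollary~\ref{cor:cor}), so it suffices that $2M\geq 4+\xi(k+3)$; since $k\leq\hat\ell$, this holds whenever $n/\hat\ell\geq 2+\xi\hat\ell/2+3\xi/2$, which is guaranteed by choosing $n_0$ at least on the order of $\hat\ell^2/\xi$.

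The only real obstacle is bookkeeping: making the interaction between the additive error $2(n/\hat\ell+1)$ from Corollary~\ref{cor:cor} and the unavoidable loss from the floor $\lfloor n/\hat\ell\rfloor$ transparent enough that the clean threshold $b-a\geq 7/\xi$ falls out, and choosing $n_0$ large enough (polynomially in $\hat\ell$ and $1/\xi$) to absorb the constants. No further structural argument beyond Corollary~\ref{cor:cor} is needed.
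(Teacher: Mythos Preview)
Your proposal is correct and follows essentially the same approach as the paper's proof: both obtain $\sigma$ from Corollary~\ref{cor:cor}, lower-bound $C_2(\sigma,a,b)$ using that the $b-a+1$ intervals together contain at least about $(b-a)n/\hat\ell$ vertices, and then verify that the absolute bound $2(n/\hat\ell+1)$ is at most $\xi C_2(\sigma,a,b)$ once $b-a\geq 7/\xi$. The only differences are bookkeeping: the paper is slightly less careful about floors (writing $n/\hat\ell$ where you write $n/\hat\ell-1$) and arrives at the tighter threshold $n_0=\max\{n_0',\,((4+2\xi)/(3-2\xi))\hat\ell\}$, whereas your algebra leads to a somewhat larger $n_0$; either choice suffices.
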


\begin{proof}
Fix constants $\xi>0$, $\hat\ell\geq 1$ and let $n_0'$ be obtained by Corollary \ref{cor:cor} applied with $\hat\ell$.  Let 
$H=(W,E)$
be a graph with vertex set $W=\{w_1,\ldots,w_n\}$ for  
\[
	n\geq n_0=\max\{n_0',((4+2\xi)/(3-2\xi))\hat\ell\}
\] 
and fix a 
$\beta$-balanced $2$-coloring $\chi$ of $W$ and a partition 
of $W$ into intervals $I_1,\ldots,I_{\hat\ell}$ with
$|I_1|\leq\ldots \leq|I_{\hat\ell}|\leq |I_1|+1$ where $\beta\leq 2/\hat\ell$.

Let $\sigma$ be the permutation given by Corolary~ \ref{cor:cor}. Fix integers $1\leq a<b\leq \hat\ell$ such that 
$b-a>7/\xi$. Note 
that, by Corollary \ref{cor:cor},
\begin{equation}\label{eq:cameFromCor}
|C_1(\sigma,a,b) - C_2(\sigma,a,b)|\leq 2(n/\hat\ell + 1). 
\end{equation}
The above inequality and the fact that $C_1(\sigma,a,b) + C_2(\sigma,a,b) \geq (b-a)(n/\hat\ell)$ implies
\begin{equation*}
 C_2(\sigma,a,b)\geq \left(\frac{b-a}{2}\right)\frac{n}{\hat\ell} - (n/\hat\ell + 1).
\end{equation*}
By the choice of $a$, $b$ and $n_0$, we have
\begin{equation}\label{eq:eq1}
 C_2(\sigma,a,b)\geq (2/\xi)(n/\hat\ell + 1).
\end{equation}
Putting inequalities \eqref{eq:cameFromCor} and \eqref{eq:eq1} together we conclude the proof.
\end{proof}

\section{Proof of the main result}\label{sec:main}

Before going into the details of the proof of Theorem~\ref{thm:threecolor} we give some brief overview discussing the 
main ideas of the proof and 
explaining how to connect the results of Section~\ref{sec:aux}.

\subsubsection*{Overview of the proof of Theorem \ref{thm:threecolor}}
For every $\gamma>0$ and sufficiently large $n$, given an arbitrary edge coloring of $K_N$ with $3$ colors for 
$N={(2+\gamma)n}$ we want to prove that if $H$ is a 
$(\beta,\Delta)$-balanced graph on $n$ vertices,
then we always find a monochromatic copy of $H$ in $K_N$.

The strategy to prove Theorem \ref{thm:threecolor} is to apply the Embedding Lemma (Lemma~\ref{lemma:GeneralEmbedding}) 
to find the desired copy of 
$H$ in $K_N$. In order to do
this we use Lemma \ref{lemma:FindTheNiceTree} to find a monochromatic subgraph $G$ of $K_N$ composed of sufficiently 
dense regular pairs. So, using 
Facts
\ref{fact:Slicing} and \ref{fact:SuperSlicing} it is easy to see that deleting some vertices of $G$ we can find a 
monochromatic graph $G'\subset G$ 
which has a regular partition containing
super-regular pairs covering $(1+o(1))n$ vertices.

In the second part of the proof we carefully construct a partition of 
$V(H)$ and, since~$H$ has small bandwidth, we make use of 
Lemma~\ref{lemma:balancedVectors} to show that this 
partition is compatible with the partition of $G'$. Then, we can apply the Embedding Lemma to find the monochromatic 
copy of $H$, concluding the 
proof.

\subsubsection*{Proof of Theorem \ref{thm:threecolor}}
Let $\gamma>0$ and $\Delta\geq 1$ be given. Lemma \ref{lemma:FindTheNiceTree} applied with $\gamma$ gives~$\eps_0$. 
Next we apply Lemma 
\ref{lemma:GeneralEmbedding} with $d=1/4$ and $\Delta$ to get $\eps_1$. Set 
$$
\eps=\min\{\eps_0,\eps_1/2, \gamma/19\}.
$$
Since $\eps\leq \eps_0$, Lemma~\ref{lemma:FindTheNiceTree} gives to us a natural number $K_0$. Fix $\xi=\gamma/304$ and 
let $n_0$ be obtained by 
an application of Lemma~\ref{lemma:balancedVectors} with parameters $\xi$ and $K_0$. Set
\begin{equation*}
\beta=\eps\xi(1+2\xi)/36\Delta^2K_0^2.
\end{equation*}

Let $H=(W,E_H)$ be a balanced $(\beta,\Delta)$-graph on $n$ vertices. Now put ${N=\lfloor(2+\gamma)n\rfloor}$, where 
$N\geq \max\{n_0,K_0\}$. 
Consider an arbitrary coloring $\chi_{K_N}\colon E(K_{N})\to[3]$ of the edges of $K_N$. We want to
show
that every such coloring yields a monochromatic copy of $H$.

\medskip
\noindent\textit{Partitioning the vertices of $K_{N}$}.
Next we find a monochromatic and sufficiently regular subgraph $G'$ of $K_{N}$. By Lemma~\ref{lemma:FindTheNiceTree}, 
there are a color 
(say color 1), integers $\ell,\ell',k$ with ${\ell,\ell'\leq k \leq K_0}$ and $\ell\geq (1-\gamma/4)k/4$, a tree $T$ on 
vertex set
$\{x_1,\dots,x_\ell,y_1,\dots,y_\ell,z_1,\dots,z_{\ell'}\}$ containing
a matching $M$ with edge set $E_M =\{ x_i y_i\colon i=1,\ldots,\ell\}$ with an even distance in $T$
between any $x_i$ and $x_j$ for all $i$ and $j$, such that there exists a partition $(V_i)_{i\in[k]}$ of ${V=V(K_N)}$ 
such that $K^1_N$ is 
$(\eps,1/3)$-regular on $T$ and $|V_1|=\ldots=|V_k|=m$, where $m\geq (1-\eps)N/k$. Let $G_T$ be the subgraph of $K^1_N$ 
induced by the classes in 
$(V_i)_{i\in[k]}$ corresponding to the vertices of $T$.

In order to apply the Embedding Lemma, we need the classes of $G_T$ that correspond to the matching edges to form 
super-regular pairs and the other 
pairs of classes should be sufficiently regular. We
can ensure this by deleting some vertices of $G_T$. In fact, applying Fact \ref{fact:SuperSlicing} and, after that, 
Fact \ref{fact:Slicing}, it is 
easy to see that we find a subgraph $G'\subset G_T$ with
classes 
$A_1,\dots,A_\ell,$ $B_1,\dots,B_\ell,$ $C_1,\dots,C_{\ell'}$ of size at least $(1-\eps)m$ corresponding, respectively, 
to the vertices 
$x_1,\dots,x_\ell,y_1,\dots,y_\ell,z_1,\dots,z_{\ell'}$ of
the
tree $T$, such that the bipartite graphs induced by $A_i$ and $B_i$ are $(2\eps, 1/3-\eps)$-super-regular and the 
bipartite graphs induced by all the 
other pairs are $(2\eps, 1/3-\eps)$-regular.
Furthermore, let $D_{\min}$ be the set with the smallest cardinality among the sets in $A_1,\dots,A_\ell,$ 
$B_1,\dots,B_\ell,$ 
$C_1,\dots,C_{\ell'}$. Since  $\eps\leq \gamma/19$, ${N=\lfloor(2+\gamma)n\rfloor}$, $m\geq
(1-\varepsilon)N/k$ and $\ell\geq (1-\gamma/4)k/4$, one can see that 
\begin{equation}\label{eq:limitD}
|D_{\min}|\geq (1+\gamma/152)n/2\ell. 
\end{equation}

\medskip
\noindent\textit{Partitioning the vertices of $H$}.
Now it is time to construct a partition of $W$ ready for the application of Lemma~\ref{lemma:GeneralEmbedding}.  Since 
$H$ is a balanced 
$(\beta,\Delta)$-graph, there exists a coloring ${\chi_H\colon
V(H)\to[2]}$ such that ${\big||\chi^{-1}(1)|-|\chi^{-1}(2)|\big|\leq \beta|\chi^{-1}(2)|}$.

Let $w_1,\ldots,w_n$ be an ordering of $W$ such that $|i-j|\leq \beta n$ for every $w_i w_j\in E_H$ and let $\hat\ell$ 
be the smallest integer 
dividing $n$ with ${\hat\ell\geq (7 K_0/\xi) + \ell\geq \ell(7/\xi + 1)}$. Consider  the
partition of $V(H)$ into intervals $I_1,\ldots,I_{\hat\ell}$ with ${|I_1|=\ldots= |I_{\hat\ell}|=n/\hat\ell}$ taking 
this ordering into account, i.e., 
$I_i=w_{(i-1)n/\hat\ell + 1},\ldots,w_{in/\hat\ell}$
for ${i=1,\ldots,\hat\ell}$. By Lemma \ref{lemma:balancedVectors}, since $\beta\leq 2/\hat\ell$, there exists a 
permutation 
$\sigma\colon[\hat\ell]\to[\hat\ell]$ such that
\begin{equation*}
|C_1(\sigma,a,b)-{C_2(\sigma,a,b)}|\leq \xi {C_2(\sigma,a,b)}
\end{equation*}
for all integers $1\leq a<b\leq \hat\ell$ with $b-a\geq 7/\xi$. Define ${a_i=(i-1)\hat\ell/\ell + 1}$ and 
${b_i=i\hat\ell/\ell}$ and 
consider the blocks
$J_i=\{I_{\sigma(a_i)},I_{\sigma(a_i+1)}, \ldots, I_{\sigma(b_i)}\}$ for ${i=1,\ldots,\ell}$. We write $C_1(J_i)$ for 
$C_1(\sigma,a_i,b_i)$ and $C_2(J_i)$ for $C_2(\sigma,a_i,b_i)$.
Thus, for ${i=1,\ldots,\ell}$, since $b_i - a_i = \hat\ell/\ell + 1\geq 7/\xi$, we have
\begin{equation}\label{eq:equalColors}
|C_1(J_i)-{C_2(J_i)}|\leq \xi {C_2(J_i)},
\end{equation}

Recall we have found a tree $T$ on vertex set $\{x_1,\dots,x_\ell,y_1,\dots,y_\ell,z_1,\dots,z_{\ell'}\}$ containing
matching edges $E_M =\{ x_i y_i\colon i=1,\ldots,\ell\}$ such that the distance in $T$ between any~$x_i$ and $x_j$ for 
all $i$ and $j$ is even. Our
partition of $W$ will be composed of clusters $X_1,\dots,X_{\ell},Y_1,\dots,Y_{\ell},Z_1,\dots,Z_{\ell'}$ 
corresponding to 
$x_1,\dots,x_\ell,y_1,\dots,y_\ell,z_1,\dots,z_{\ell'}$.

For every $i=1,\ldots,\ell$, we  will put most of the vertices of $J_i$ in the clusters $X_i$ and~$Y_i$, depending on 
the color they received from 
$\chi_H$. The remaining vertices will be distributed
in order to make it possible to ``walk'' between the matching clusters.

We divide each interval $I_i$ in two parts. The first one, called \emph{link} of $I_i$, is denoted by~$L_i$. The links 
are responsible to 
make the connections between the
matching clusters. For the last interval, we set
$L_{\hat\ell}=\emptyset$. For $1\leq i\leq \hat\ell-1$, if $I_i$ and $I_{i+1}$ are in the same block $J_r$, then 
$L_i=\emptyset$.

Suppose that $I_i\in J_r$ and $I_{i+1}\in J_{s}$ with $r\neq s$ and $1\leq i\leq \hat\ell-1$. Let
$P_T(r,s)$ be the path of $T$ between $x_r$ and
$x_s$ and consider the path $P^{\int}_T(r,s)\subset P_T(r,s)$ obtained by excluding the vertices of the set 
$\{x_r,y_r,x_s,y_s\}$ from $P_T(r,s)$, 
i.e., 
$P^{\int}_T(r,s)$ is the ``internal'' part of the
path of $T$ that one should use to reach $x_s$ from $x_r$. For a lighter notation set~$t_{r,s}=|P^{\int}_T(r,s)|$.
We divide the ${(t_{r,s}+1)\beta n}$ last vertices of $I_i$ in $t_{r,s}+1$ ``pieces'' of size~$\beta n$, 
respecting their sequence in the 
interval, where the $j$-th piece is denoted by $L_i(j)$ for ${1\leq j\leq t_{r,s}+1}$, that is,
\begin{equation*}
L_i(j)=w_{(i-(t_{r,s}+2-j)\beta \hat\ell)n/\hat\ell+1},\ldots,w_{(i-(t_{r,s}+1-j)\beta \hat\ell)n/\hat\ell}.
\end{equation*}
We put $L_i = \{L_i(1),\ldots,L_i(t_{r,s}),L_i(t_{r,s}+1)\}$.

Since we have described the links, we can now define the main part of the intervals. We define $\KE_i = I_i \setminus 
L_i$ as the \emph{kernel} of 
the interval $I_i$, which will be placed on the matching clusters $X_i$ and $Y_i$.

We have to construct the clusters that will compose the partition of $H$. Initially, let each cluster in 
$\{X_1,\dots,X_{\ell},Y_1,\dots,Y_{\ell},Z_1,\dots,Z_{\ell'}\}$ be empty. Consider the
block $J_i$ for every $1\leq i\leq \ell$. For each interval $I_p\in J_i$ we include in $X_i$ all the vertices $w$ of 
the kernel~$\KE_p$ with 
$\chi_H(w)=1$ and we include in $Y_i$ all the vertices $w$ of $\KE_p$ with $\chi_H(w)=2$.

The next step is to accommodate all the links. Consider the interval $I_i$ for ${1\leq i\leq \hat\ell-1}$ and assume 
that $I_i$ 
is in $J_r$ and $I_{i+1}$ is in $J_{s}$ with $r\neq
s$, otherwise the link we are looking for is empty and there is nothing to do. Denote the internal path 
$P^{\int}_T(r,s)$ of $P_T(r,s)$ by 
$\{u_1,\ldots,u_{t_{r,s}}\}$ and let $u_0$ and $u_{t_{r,s}+1}$ be, respectively, 
the
vertices of $T$ connected to $u_1$ and~t$u_{t_{r,s}}$ in $P_T(r,s)$. 

Now we will show how it is possible to ``walk'' between the matching clusters. note that $u_0$ can be either $x_r$ or 
$y_r$. Without loss of generality we assume 
that $u_0=x_r$. For $1\leq j\leq t_{r,s}+1$, we put the vertices $w$ of $L_i(j)$ with $\chi_H(w)=1$ in the 
corresponding class of $u_{j-1}$ if $j$ is 
even, and in the corresponding class of $u_j$ if $j$ is odd. For those $w$ with $\chi_H(w)=2$, we do the other way 
around, i.e., we put them in the 
corresponding class of $u_{j}$ if $j$ is even, and in the corresponding class of 
$u_{j-1}$ if $j$ is odd. Since $x_i$ and $x_j$ are at an even distance for all $1\leq i<j\leq \ell$ and the links have 
size $\beta n$, we know 
that there is no edges inside the clusters and if there is an edge between two
clusters, then the corresponding edge is present in $T$.

\medskip
\noindent\textit{Applying the Embedding Lemma}.
Here we will show that the vertex partition of $W$ is $(2\eps_1,T,M)$-compatible with the partition of $V(G')$ we 
constructed before. Thus, we can 
apply the Embedding Lemma to find the desired monochromatic copy of $H$ in $K_N$.

The first step is to bound by above the size of each cluster in the partition 
\[
	\{X_1,\dots,X_{\ell},Y_1,\dots,Y_{\ell},Z_1,\dots,Z_{\ell'}\}
\] of
$W$. Note that, for every $1\leq i\leq \ell$, we have
${C_1(J_i)+C_2(J_i) = n/\ell}$. Using this fact and~\eqref{eq:equalColors} one can easily obtain that, for every $1\leq 
i\leq \ell$,
\begin{equation}
(1-\xi)\frac{n}{2\ell} \leq C_1({J_i}), C_2({J_i}) \leq (1+\xi)\frac{n}{2\ell}.
\end{equation}

By the construction, every set $X_i$ (resp.\ $Y_i$) is composed only of vertices $v$ with $\chi(v)=1$ ($\chi(v)=2$). Furthermore, 
these vertices can come from one kernel and at most two pieces of each link. Then, 
\begin{equation}\label{eq:limitXY}
|X_i|,|Y_i| \leq (1+\xi)\frac{n}{2\ell} + 2\hat\ell \beta n
=\left(1+\xi+  4\ell\hat\ell\beta\right) \frac{n}{2\ell}
\leq |D_{\min}|,
\end{equation}
where the last inequality follows by inequality \eqref{eq:limitD} and the choice of $\xi$, $\beta$ and $\hat\ell$.

For the clusters $Z_i$, for $1\leq i\leq \ell'$, we know that they are composed only of vertices in at most two pieces 
of each link. Thus,
\begin{equation}\label{eq:limitZ}
|Z_i| \leq 2\hat\ell\beta n=(4\ell\hat\ell\beta)\frac{n}{2\ell}\leq \frac{\eps}{\Delta^2}|D_{\min}|,
\end{equation}
where the last inequality follows by inequality \eqref{eq:limitD} and the choice of $\beta$ and $\hat\ell$.

Now we can check that the partitions of $W$ and $V(G')$ are compatible. Based on Definition 
\ref{def:compatiblePartitions} we define the sets $U_j$ 
and $U'_j$ for $1\leq j\leq 2\ell+\ell'$ with respect to the partition
$\{X_1,\dots,X_{\ell},Y_1,\dots,Y_{\ell},Z_1,\dots,Z_{\ell'}\}$ of $W$. Define $W_j=X_j$ if $1\leq j\leq \ell$, 
$W_j=Y_{j-\ell}$ if $\ell+1\leq j\leq 
2\ell$, and $W_j=Z_{j-2\ell}$ if $2\ell+1\leq
j\leq 2\ell+\ell'$. Then, we will verify that the four conditions of Definition \ref{def:compatiblePartitions} hold:

\begin{enumerate}[label=\rmlabel]
 \item By the construction of the partition of $W$, if there is an edge between two clusters, then the 
corresponding edge is present in 
$T$.

\item Owing to \eqref{eq:limitD} every set $D$ in the partition
$\{A_1,\dots,A_\ell,$ $B_1,\dots,B_\ell,$ $C_1,\dots,C_{\ell'}\}$ of~$V(G')$ has size $|D|\geq
(1+\gamma/152)n/2\ell$. So, inequalities
\eqref{eq:limitXY} and \eqref{eq:limitZ} show that condition~\ref{d:cPii} holds.

\item Fix $1\leq j\leq 2\ell+\ell'$. Define $U_j$ as the set of vertices of~$W_j$ with neighbors in some~$W_k$ 
with $j\neq k$ and 
$\{j,k\}\notin M$. We divide in two cases:
\begin{enumerate}[label=\alabel]
\item  $2\ell+1\leq j\leq 2\ell+\ell'$: We have $U_j=Z_{j-2\ell}$. By \eqref{eq:limitZ}, $|U_j|\leq \eps 
|D_{\min}|/\Delta$.
\item $1\leq j\leq 2\ell$: In this case, $U_j$ is composed only of neighbors of vertices in exactly one set of 
$\{Z_1,\ldots,Z_{\ell'}\}$. Thus, since $\Delta$ if the maximum degree of $H$, by
\eqref{eq:limitZ}, we conclude that $|U_j|\leq \eps |D_{\min}|/\Delta$.
\end{enumerate}
 Thus, for every $j=1,\ldots,2\ell+\ell'$ we have
\begin{equation}\label{eq:UjDmin}
|U_j|\leq \frac{\eps}{\Delta} |D_{\min}|,
\end{equation}
which shows that condition~\ref{d:cPiii} holds. 

\item Define the set $U'_j=N_H(U)\cap (W_j\setminus U)$, where $U=\bigcup_{i=1}^{2\ell+\ell'}U_i$. Consider the 
following cases.
\begin{enumerate}[label=\alabel]
\item $2\ell+1\leq j\leq 2\ell+\ell'$: Note that since every vertex of $Z_{j-2\ell}$
belongs to $U_j$, we have $U'_j=\emptyset$. Thus, it is obvious that $|U'_j|\leq 
|D_{\min}|$.
\item $\ell+1\leq j\leq 2\ell$: Here, $U'_j\subset W_j= Y_{j-\ell}$. Then, $U'_j$ is composed only of neighbors of 
$U_{j-\ell}\subset 
X_{j-\ell}$. Then, using
\eqref{eq:UjDmin}, we have ${|U'_j|\leq \Delta|U_{j-\ell}|\leq \eps |D_{\min}|}$.
\item $1\leq i\leq \ell$: This case is analogous to case (b).
\end{enumerate}

\end{enumerate}

Since we proved that the four conditions of Definition \ref{def:compatiblePartitions} hold, the partition 
\[
	\{X_1,\dots,X_{\ell},Y_1,\dots,Y_{\ell},Z_1,\dots,Z_{\ell'}\}
\]
of $W$ is $(2\varepsilon,T,M)$-compatible (then, it is clearly $(\varepsilon_1,T,M)$-compatible) with 
\[
	\{A_1,\dots,A_\ell,B_1,\dots,B_\ell,C_1,\dots,C_{\ell'}\}\,,
\] 
which is a partition of $V(G')$. Then, by Lemma 
\ref{lemma:GeneralEmbedding}, we conclude that 
$H\subset G'$. 
This finishes the proof, since $G'$ is a
monochromatic subgraph of $K_{N}$.
\qed

\section{Sketch of the proof of Theorem \ref{thm:twocolor}}\label{sec:two}

We show that for every $\gamma>0$ and natural number $\Delta$, there exists a
constant $\beta>0$ such that for every sufficiently large $(\beta,\Delta)$-graph $H$ with a proper
2-coloring $\chi_H\colon\, V(H) \to [2]$ where $t_1=|\chi_H^{-1}(1)|$ and
$t_2= |\chi_H^{-1}(2)|$, with ${t_1\leq t_2}$, we can find a monochromatic copy of~$H$ in every edge coloring of $E(K_{N})$ with ${N=(1+\gamma) \max 
\{2t_1+t_2, 2t_2 \}}$. Let~$H$ be such a graph and assume
$2t_1\geq t_2$ (the complementary case can be solved in a similar way).

The proof of Theorem \ref{thm:twocolor} is very similar to the proof of Theorem \ref{thm:threecolor}. Here we also embed~$H$ in parts, considering a partition 
of a monochromatic subgraph $G$ of $K_N$. The partition we need is composed of a special cluster $W$ and clusters 
$X_1,Y_1,\ldots,X_m,Y_m$ corresponding to a 
``large'' matching $M$ with matching edges ${E_M=\{x_i,y_i\colon i=1,\ldots,m\}}$ such that for 
every~$i=1,\ldots,m$ the pairs $\{X_i,Y_i\}$ are super-regular and the pairs $\{X_i,W\}$ 
are regular. The special cluster $W$ is needed to allow us to ``walk'' between the clusters 
$X_1,Y_1,\ldots,X_m,Y_m$.

The problem in the preparation of the host monochromatic graph $G$ is the fact that $H$ is not as balanced as it is in the setup of Theorem \ref{thm:threecolor}. So,
in order to embed $H$ in $G$ we need that $|Y_i|/|X_i|=t_2/t_1$. Fortunately, by~\cite{HaLuTi02}*{Theorem~3}, since 
$t_2/t_1\leq 2$ in the case we are considering,
we can find such a monochromatic graph $G$. Using Fact \ref{fact:SuperSlicing} we
can easily make the matching pairs super-regular.

Now we have to prepare the graph $H$ for the embedding. We consider the ordering of its vertices respecting the bandwidth condition and divide the set of 
vertices into intervals. Thus, we can find a permutation of such intervals
such that blocks of intervals fit into the super-regular pairs of $G$. Then, using few vertices we can ``walk'' from one super-regular pair to 
another as
done in the proof of Theorem~\ref{thm:threecolor} and we are done.

\begin{bibdiv}
\begin{biblist}

\bib{AlBrSk13}{article}{
      author={Allen, Peter},
      author={Brightwell, Graham},
      author={Skokan, Jozef},
       title={Ramsey-goodness---and otherwise},
        date={2013},
        ISSN={0209-9683},
     journal={Combinatorica},
      volume={33},
      number={2},
       pages={125\ndash 160},
      review={\MR{3071850}},
}

\bib{BeSk09}{article}{
      author={Benevides, Fabricio~Siqueira},
      author={Skokan, Jozef},
       title={The 3-colored {R}amsey number of even cycles},
        date={2009},
        ISSN={0095-8956},
     journal={J. Combin. Theory Ser. B},
      volume={99},
      number={4},
       pages={690\ndash 708},
      review={\MR{2518202 (2010h:05186)}},
}

\bib{Bo09}{thesis}{
      author={B{\"o}ttcher, Julia},
       title={Embedding large graphs -- the {B}ollob{\'a}s--{K}oml{\'o}s
  conjecture and beyond},
        type={Ph.D. Thesis},
        date={2009},
}

\bib{BoHeTa10}{article}{
      author={B{\"o}ttcher, Julia},
      author={Heinig, Peter},
      author={Taraz, Anusch},
       title={Embedding into bipartite graphs},
        date={2010},
        ISSN={0895-4801},
     journal={SIAM J. Discrete Math.},
      volume={24},
      number={4},
       pages={1215\ndash 1233},
      review={\MR{2735919 (2012a:05241)}},
}

\bib{BoPrTaWu10}{article}{
      author={B{\"o}ttcher, Julia},
      author={Pruessmann, Klaas~P.},
      author={Taraz, Anusch},
      author={W{\"u}rfl, Andreas},
       title={Bandwidth, expansion, treewidth, separators and universality for
  bounded-degree graphs},
        date={2010},
        ISSN={0195-6698},
     journal={European J. Combin.},
      volume={31},
      number={5},
       pages={1217\ndash 1227},
      review={\MR{2644412 (2012a:05091)}},
}

\bib{FaSc75}{article}{
      author={Faudree, R.~J.},
      author={Schelp, R.~H.},
       title={Path {R}amsey numbers in multicolorings},
        date={1975},
     journal={J. Combinatorial Theory Ser. B},
      volume={19},
      number={2},
       pages={150\ndash 160},
      review={\MR{0412023 (54 \#152)}},
}

\bib{FiLu07}{article}{
      author={Figaj, Agnieszka},
      author={{\L}uczak, Tomasz},
       title={The {R}amsey number for a triple of long even cycles},
        date={2007},
        ISSN={0095-8956},
     journal={J. Combin. Theory Ser. B},
      volume={97},
      number={4},
       pages={584\ndash 596},
      review={\MR{2325798 (2008c:05119)}},
}

\bib{GeGy67}{article}{
      author={Gerencs{\'e}r, L.},
      author={Gy{\'a}rf{\'a}s, A.},
       title={On {R}amsey-type problems},
        date={1967},
        ISSN={0524-9007},
     journal={Ann. Univ. Sci. Budapest. E\"otv\"os Sect. Math.},
      volume={10},
       pages={167\ndash 170},
      review={\MR{0239997 (39 \#1351)}},
}

\bib{GrRoSp90}{book}{
      author={Graham, Ronald~L.},
      author={Rothschild, Bruce~L.},
      author={Spencer, Joel~H.},
       title={Ramsey theory},
     edition={Second},
      series={Wiley-Interscience Series in Discrete Mathematics and
  Optimization},
   publisher={John Wiley \& Sons Inc.},
     address={New York},
        date={1990},
        ISBN={0-471-50046-1},
        note={A Wiley-Interscience Publication},
      review={\MR{1044995 (90m:05003)}},
}

\bib{GyRuSaSz07b}{article}{
      author={Gy{\'a}rf{\'a}s, Andr{\'a}s},
      author={Ruszink{\'o}, Mikl{\'o}s},
      author={S{\'a}rk{\"o}zy, G{\'a}bor~N.},
      author={Szemer{\'e}di, Endre},
       title={Three-color {R}amsey numbers for paths},
        date={2007},
        ISSN={0209-9683},
     journal={Combinatorica},
      volume={27},
      number={1},
       pages={35\ndash 69},
      review={\MR{2310787 (2008b:05110)}},
}

\bib{GyRuSaSz07}{article}{
      author={Gy{\'a}rf{\'a}s, Andr{\'a}s},
      author={Ruszink{\'o}, Mikl{\'o}s},
      author={S{\'a}rk{\"o}zy, G{\'a}bor~N.},
      author={Szemer{\'e}di, Endre},
       title={Tripartite {R}amsey numbers for paths},
        date={2007},
        ISSN={0364-9024},
     journal={J. Graph Theory},
      volume={55},
      number={2},
       pages={164\ndash 174},
      review={\MR{2316280 (2008b:05111)}},
}

\bib{HaLuPeRoRuSiSk06}{article}{
      author={Haxell, P.~E.},
      author={{\L}uczak, T.},
      author={Peng, Y.},
      author={R{\"o}dl, V.},
      author={Ruci{\'n}ski, A.},
      author={Simonovits, M.},
      author={Skokan, J.},
       title={The {R}amsey number of hypergraph cycles. {I}},
        date={2006},
        ISSN={0097-3165},
     journal={J. Combin. Theory Ser. A},
      volume={113},
      number={1},
       pages={67\ndash 83},
      review={\MR{2192770 (2006i:05110)}},
}

\bib{HaLuTi02}{article}{
      author={Haxell, P.~E.},
      author={{\L}uczak, T.},
      author={Tingley, P.~W.},
       title={Ramsey numbers for trees of small maximum degree},
        date={2002},
        ISSN={0209-9683},
     journal={Combinatorica},
      volume={22},
      number={2},
       pages={287\ndash 320},
        note={Special issue: Paul Erd{\H{o}}s and his mathematics},
      review={\MR{1909088 (2003d:05141)}},
}

\bib{KoSi96}{incollection}{
      author={Koml{\'o}s, J.},
      author={Simonovits, M.},
       title={Szemer\'edi's regularity lemma and its applications in graph
  theory},
        date={1996},
   booktitle={Combinatorics, {P}aul {E}rd{\H o}s is eighty, {V}ol.\ 2
  ({K}eszthely, 1993)},
      series={Bolyai Soc. Math. Stud.},
      volume={2},
   publisher={J\'anos Bolyai Math. Soc.},
     address={Budapest},
       pages={295\ndash 352},
      review={\MR{MR1395865 (97d:05172)}},
}

\bib{KoSaSz97}{article}{
      author={Koml{\'o}s, J{\'a}nos},
      author={S{\'a}rk{\"o}zy, G{\'a}bor~N.},
      author={Szemer{\'e}di, Endre},
       title={Blow-up lemma},
        date={1997},
        ISSN={0209-9683},
     journal={Combinatorica},
      volume={17},
      number={1},
       pages={109\ndash 123},
      review={\MR{1466579 (99b:05083)}},
}

\bib{KoSaSz98}{article}{
      author={Koml{\'o}s, J{\'a}nos},
      author={S{\'a}rk{\"o}zy, Gabor~N.},
      author={Szemer{\'e}di, Endre},
       title={An algorithmic version of the blow-up lemma},
        date={1998},
        ISSN={1042-9832},
     journal={Random Structures Algorithms},
      volume={12},
      number={3},
       pages={297\ndash 312},
      review={\MR{1635264 (99h:05113)}},
}

\bib{KoShSiSz02}{incollection}{
      author={Koml{\'o}s, J{\'a}nos},
      author={Shokoufandeh, Ali},
      author={Simonovits, Mikl{\'o}s},
      author={Szemer{\'e}di, Endre},
       title={The regularity lemma and its applications in graph theory},
        date={2002},
   booktitle={Theoretical aspects of computer science ({T}ehran, 2000)},
      series={Lecture Notes in Comput. Sci.},
      volume={2292},
   publisher={Springer},
     address={Berlin},
       pages={84\ndash 112},
      review={\MR{1966181 (2004d:05106)}},
}

\bib{Lu99}{article}{
      author={{\L}uczak, Tomasz},
       title={{$R(C_n,C_n,C_n)\leq(4+o(1))n$}},
        date={1999},
        ISSN={0095-8956},
     journal={J. Combin. Theory Ser. B},
      volume={75},
      number={2},
       pages={174\ndash 187},
      review={\MR{1676887 (2000b:05096)}},
}

\bib{Ra94}{article}{
      author={Radziszowski, Stanis{\l}aw~P.},
       title={Small {R}amsey numbers},
        date={1994 (Lastest update: 2011)},
        ISSN={1077-8926},
     journal={Electron. J. Combin.},
      volume={1},
       pages={Dynamic Survey 1, 84 pp. (electronic)},
      review={\MR{1670625 (99k:05117)}},
}

\bib{RoRuTa99}{article}{
      author={R{\"o}dl, V.},
      author={Ruci{\'n}ski, A.},
      author={Taraz, A.},
       title={Hypergraph packing and graph embedding},
        date={1999},
        ISSN={0963-5483},
     journal={Combin. Probab. Comput.},
      volume={8},
      number={4},
       pages={363\ndash 376},
        note={Random graphs and combinatorial structures (Oberwolfach, 1997)},
      review={\MR{1723649 (2000m:05164)}},
}

\bib{RoRu99}{article}{
      author={R{\"o}dl, Vojtech},
      author={Ruci{\'n}ski, Andrzej},
       title={Perfect matchings in {$\epsilon$}-regular graphs and the blow-up
  lemma},
        date={1999},
        ISSN={0209-9683},
     journal={Combinatorica},
      volume={19},
      number={3},
       pages={437\ndash 452},
      review={\MR{1723256 (2000j:05100)}},
}

\bib{Sz75}{incollection}{
      author={Szemer{\'e}di, Endre},
       title={Regular partitions of graphs},
        date={1978},
   booktitle={Probl\`emes combinatoires et th\'eorie des graphes ({C}olloq.
  {I}nternat. {CNRS}, {U}niv. {O}rsay, {O}rsay, 1976)},
      series={Colloq. Internat. CNRS},
      volume={260},
   publisher={CNRS},
     address={Paris},
       pages={399\ndash 401},
      review={\MR{540024 (81i:05095)}},
}

\end{biblist}
\end{bibdiv}

\end{document}